\newtheorem{MainTheorem}{Theorem}
\newtheorem{Thm}{Theorem}[section]
\newtheorem{Lem}[Thm]{Lemma}
\newtheorem{Cor}[Thm]{Corollary}
\theoremstyle{definition}
\newtheorem{Def}[Thm]{Definition}
\theoremstyle{remark}
\newtheorem{Rem}[Thm]{Remark}
\numberwithin{equation}{section}
\def\Leb{{\mbox{Leb}}}
\def\dim{{\mbox{\footnotesize{dim}}}}
\begin{document}

\title[Weak Pseudo-Physical Measures and Pesin Formula for $C^1$-Anosov]{Weak Pseudo-Physical Measures and Pesin's Entropy  Formula
 for Anosov $C^1$-diffeomorphisms.}

\author{Eleonora Catsigeras}
\address{Instituto de Matem\'{a}tica y Estad\'{\i}stica \lq\lq Rafael Laguardia\rq\rq (IMERL),
Facultad de Ingenier\'{\i}a,
Universidad de la Rep\'{u}blica. Uruguay. }
\email{eleonora@fing.edu.uy}
\thanks{E.C. thanks the Department of Mathematics of the University of Alabama at Birmingham,   Prof. Alexander Blokh , and the scientific and organization committees  of \em The Dynamical Systems, Ergodic Theory and Probability Conference Dedicated to the Memory of Nikolai Chernov. \em E.C. and M.C. are partially supported by
Agencia Nacional de Investigaci\'{o}n e Innovaci\'{o}n (ANII), and Comisi\'{o}n Sectorial de Investigaci\'{o}n Cient\'{\i}fica (CSIC), Uruguay.
  E.C. is also supported in part by {\em \lq\lq For Women in Science\rq\rq} of L'Or\'{e}al-Unesco.}

\author{Marcelo Cerminara}
\email{cerminar@fing.edu.uy}

\author{Heber Enrich}
\email{enrich@fing.edu.uy}




\begin{abstract}
We consider   $C^1$ Anosov diffeomorphisms  on a compact Riemannian manifold. We define the weak pseudo-physical measures, which include the physical measures when these latter exist. We prove that ergodic weak pseudo-physical measures do  exist, and that the set of   invariant probability measures that satisfy   Pesin's Entropy Formula is the weak$^*$-closed convex hull of the ergodic   weak pseudo-physical measures. In brief, we give   in the $C^1$-scenario of uniform hyperbolicity,  a characterization of  Pesin's Entropy Formula in terms of physical-like properties.
\end{abstract}

\maketitle

%

\vspace{-.5cm}

{\footnotesize
\noindent 2010 {\em Math. Subj. Class.:} {Primary 37A35,  37D20. Secondary 37D35;  37A05.}

\noindent  {\em Keywords and phrases:} {Anosov diffeomorphisms; Pesin's Entropy Formula; Physical Measures.}}

\section{Introduction}

 \label{sectionIntroduction}

 The   purpose of this paper is to give, in the $C^1$-scenario of uniform hyperbolicity, a characterization of those invariant measures that satisfy Pesin's Entropy Formula  in terms of their physical-like properties. Our main result   works, for $C^1$ Anosov diffeomorphisms, as Ledrappier-Young characterization  \cite{LedrappierYoung} of the measures $\mu$ that satisfy Pesin's Entropy Formula (which holds in the $C^2$ context but not in the general $C^1$ context), by substituting the property of absolute continuity of the unstable conditional decomposition of $\mu$, by the weak pseudo-physical property   of its ergodic decomposition.

 Pesin Theory \cite{Pesin1976, Pesin1977}  gives   relevant tools and results of the modern differentiable ergodic theory. It works for $C^2$  (or at least $C^1$ plus H\"{o}lder) dynamical systems.  For instance, for $C^2$ hyperbolic systems,  Pesin's Entropy Formula computes exactly the metric entropy of a diffeomorphism in terms of the  mean value of the sum of its positive Lyaypunov exponents. In the $C^2$ scenario, Pesin's Entropy Formula holds  if and only if the invariant measure has absolutely continuous conditional decomposition along the unstable manifolds. \cite{LedrappierYoung}.

 Through the properties of absolute continuity of invariant measures, and mainly through the absolute continuity of the holonomy along the invariant foliations,  Pesin Theory gives the   tools to construct   physically significant invariant measures for $C^1$-plus H\"{o}lder systems. Among these measures, the so called Sinai-Ruelle-Bowen (SRB) measures \cite{SinaiSRB,RuelleSRB,BowenSRB}, have particular relevance to describe the asymptotic statistics of Lebesgue-positive set of  orbits, not only for $C^1$ plus H\"{o}lder uniform and non-uniform hyperbolic systems, but also for $C^1$ plus H\"{o}lder partially hyperbolic systems \cite{PesinSinai,BDV}. Precisely, one of the most relevant properties of ergodic SRB measures for  $C^1$ plus H\"{o}lder  hyperbolic systems,   is that they are \em physical; \em  namely, their basins of statistical attraction have positive Lebesgue measure, even for Lebesgue non-preserving systems.

 In particular, for transitive Anosov $C^1$ plus H\"{o}lder  systems, the theorem of Pesin-Sinai  (see for instance \cite{PesinSinai}) states that there exists a unique physical     measure: it is the unique invariant probability measure that satisfies Pesin's Entropy Formula, and so, the only one with absolutely continuous conditional measures along the unstable foliation. Besides, its basin of statistical attraction covers Lebesgue all the orbits. In other words, for   a $C^1$ plus H\"{o}lder Anosov system, the definition of   physical measure,   SRB measure, and measure that satisfies Pesin's Entropy Formula, are equivalent.

 Nevertheless, in the $C^1$-scenario, the above results do not work,  because the theorems of   Pesin  Theory that ensure the  absolute continuity of unstable conditional measures, and of the holonomies along invariant foliations, fail. Even the existence of the unstable manifolds,  along which one could construct the conditional unstable measures, fails in the $C^1$ context  \cite{Pugh}. In the particular case of $C^1$-Anosov diffeomorphisms, invariant $C^0$ foliations with $C^1$ leaves do exist (see for instance \cite{HirshPughShub}), but the holonomies   along the invariant foliations are not necessarily absolutely continuous \cite{RobinsonYoung}.

 As a consequence, for   $C^1$   systems, if one defined SRB measures  by the existence of their absolutely continuous unstable conditional measures, one would  lack the hope to construct them. Nevertheless, one can still define   SRB or SRB-like measures, if one forgets for a while the properties of  absolute continuity, and focus the attention of the properties of statistical attraction.  In other words, one can try to look directly at their physical properties,   dodging  the lack  of conditional absolute continuity. For that reason, in the $C^1$-scenario, we    look for the Lebesgue abundance of points in their statistical basins  (or, more precisely, in the $\epsilon$-approach of their statistical basins). This search  was used in \cite{CE} to construct the SRB-like or pseudo-physical measures for $C^0$ dynamical systems on a compact Riemannian manifold. The notion of SRB-like or pseudo-physical measures, even in a non differentiable context, translate to the space of probability measures     the concept of statistical attraction   defined by Ilyashenko  \cite{GorodetskiIlyashenko,Ilyashenko} in the ambient manifold.

   In \cite{Qiu} it is proved   that $C^1$ \em generically, \em  transitive and uniformly hyperbolic systems  do have a unique measure satisfying Pesin's Entropy Formula. Besides this measure is physical and   its basin of statistical attraction covers Lebesgue almost all the orbits.
In the general  $C^1$-scenario with non uniform hyperbolicity,  Pesin's Entropy Formula was first proved considering   systems that preserve a  smooth measure (we call a measure $\mu$   smooth  if $\mu \ll \Leb$, where $\Leb$ is the Lebesgue measure). In fact, for $C^1$ generic diffeomorphisms that preserve a smooth measure $\mu$, \cite{Tahzibi} proved that $\mu$ satisfies Pesin's Entropy Formula. Later, in \cite{SunTian}, this formula was also proved for any  $C^1$ partially hyperbolic system  that preserves a smooth measure.

  If no smooth measure is preserved, in \cite{CCE} is proved  that the   pseudo-physical or SRB-like measures still exist and satisfy Pesin's Entropy Formula, provided that the system is   $C^1$ partially hyperbolic. Recently, and also for $C^1$ partially hyperbolic systems,   \cite{YangCao}   derived a proof of Shub's Entropy Conjecture \cite{Shub}  from their  method of construction of  measures that satisfy Pesin's Entropy Formula.

  In this paper we focus on $C^1$ Anosov systems to  search for a converse of the result in \cite{CCE}. Namely, our purpose is to characterize all the invariant measures that satisfy Pesin's Entropy Formula. First, we need  to generalize the concept of pseudo-physical or SRB-like measure. So, we define the \em weak pseudo physical \em   measures $\mu$, by taking into account only the $\epsilon$-approach  of its basin of statistical attraction up to time $n$, which we denote by $A_{\epsilon, \, n}(\mu)$, and the exponential rate of the variation of the Lebesgue measure of $A_{\epsilon, \, n}(\mu)$  when $n \rightarrow + \infty$  (Definition \ref{definitionWeakPseudoPhysicalMeasure}). In Theorem \ref{propositionWeaklyPseudoPhysicalMeasures} we study general properties of the weak pseudo physical measures, which do always exist.  We    prove that for any $C^1$  Anosov diffeomorphism, the   weak pseudo-physical measures satisfy Pesin's Entropy Formula (Part  \textsc{a}) of Theorem \ref{mainTheo}).
   Besides, we prove a converse result, to conclude that the set of invariant measures that satisfy Pesin's Entropy Formula is the closed convex hull of the weak pseudo-physical measures (Theorem \ref{mainTheo}).

   So, Theorem \ref{mainTheo} characterizes all the measures that satisfy Pesin's Entropy Formula in terms of the statistical properties that define the weak pseudo-physical notion. Nevertheless,  as far as we know, no example is still known of a $C^1 $ Anosov diffeomorphism for which weak pseudo-physical measures are not physical. In other words, there are not known examples of $C^1$-Anosov systems such that  an ergodic measure  satisfies Pesin's Entropy Formula and is non physical.

  The   proof of Theorem \ref{mainTheo} is based on the construction of local $C^1$ pseudo-unstable foliations, which approach  the local $C^0$ unstable foliation, and allow us to apply a Fubini decomposition of the Lebesgue measure of the $\epsilon$-basin $A_{\epsilon, \, n}(\mu)$,  for any ergodic measure $\mu$. The pseudo-unstable foliations are constructed via Hadamard graphs whose future iterates have bounded dispersion. This method was  introduced by Ma\~{n}\'{e} in \cite{Mane} to prove  Pesin's Entropy Formula in the $C^1$ plus H\"{o}lder context. Much later, it  was   applied also to $C^1$ systems  in \cite{SunTian} and \cite{CCE}.

  As said above, in Theorem \ref{mainTheo} of this paper, we prove that the weak pseudo-physical condition for the ergodic components of an invariant measure, is necessary and sufficient   to satisfy Pesin's Entropy Formula. The sufficient condition is just a corollary of the results in \cite{CCE}. On the contrary, the proof of the necessary condition  is new, although it is also strongly based on Ma\~{n}\'{e}'s method  to construct,  via Hadamard graphs, the $C^1$-pseudo unstable foliations.

  As a subproduct of the proof of Theorem \ref{mainTheo}, we also obtain an equality for any ergodic measure of a $C^1$-Anosov diffeomorphism, even for measures that do  not satisfy Pesin's Entropy Formula. This equality, which is stated in Theorem \ref{mainTheorem3}, considers the exponential rate $$a(\mu) := \lim_{\epsilon \rightarrow 0^+} \limsup_{n \rightarrow + \infty}\frac{ \log\mbox{Leb}(A_{\epsilon, n}(\mu))}{n},$$ according to which the Lebesgue measure of the  $\epsilon$-basin    $A_{\epsilon, n}(\mu)$ of each ergodic measure $\mu$ varies with time $n$. Theorem \ref{mainTheorem3} equals the exponential rate $a(\mu)$ with the difference $$h_{\mu} (f) - \int \sum_i \chi_i^+ \, d \mu,$$ where $h_{\mu}(f)$ is the metric entropy and $\{\chi_i^+\}$ are the positive Lyapunov exponents. So, in the particular case of ergodic measures $\mu$ satisfying Pesin's Entropy Formula, the exponential rate $a(\mu)$ is null, and conversely.

 \subsection{Definitions and Statement of the Results}

 Let $M$ be a compact, connected, Riemannian $C^1$-manifold without boundary and let $f: M \mapsto M$ be continuous.

 \begin{Def} \em  \em \label{definitionEmpiricProba} \textsc { (Empiric Probability)}
 For each $x \in M, \, n \in {\mathbb N}^+$,   the \em empiric probability \em $\sigma_n(x)$ along the finite piece of the future orbit of $x$  up to time $n$, is defined by
 $$\sigma_n(x) := \frac{1}{n}\sum_{j= 0}^{n-1} \delta_{f^j(x)},$$
 where $\delta_y$ denotes the Dirac delta probability measure supported on the point $y \in M$.
 \end{Def}

We denote by ${\mathcal M}$ the space of all the Borel probability measures on $M$, endowed with the weak$^*$ topology. We denote by ${\mathcal M}_f \subset {\mathcal M}$ the space of $f$-invariant Borel probability measures.
It is well known that ${\mathcal M}$ and ${\mathcal M}_f$ are nonempty, weak$^*$ compact, metrizable, sequentially compact and convex topological spaces. We fix and choose a metric $\mbox{dist}^*$ in ${\mathcal M}$ that induces the weak$^*$ topology.

\begin{Def} \em
\em \label{definitionBasinOfAttraction}  \textsc { (Basin  and pseudo basin  of attraction of a measure.)}
Let $\mu \in{\mathcal M}$ and $\epsilon >0$. We construct the following measurable sets in the manifold $M$:
 \begin{equation} \label{eqnL14} B(\mu) := \Big\{x \in M: \lim_{n \rightarrow + \infty} \sigma_n(x) = \mu   \Big\};\end{equation}

 \begin{equation} \label{eqnL15}A_{\epsilon}(\mu) := \Big\{x \in M: \liminf_{n \rightarrow + \infty} \mbox{dist}^*(\sigma_{n}(x), \mu) < \epsilon  \Big\};\end{equation}
 \begin{equation} \label{eqnL16}A_{\epsilon, \, n} (\mu) := \Big\{x \in M:   \mbox{dist}^* (\sigma_{n}(x), \mu) < \epsilon  \Big\}.\end{equation}

 We call $B(\mu)$ the \em  basin of attraction \em of $\mu$. We call $A_{\epsilon}(\mu)$ the \em $\epsilon$-pseudo basin of attraction  \em of $\mu$. We call $A_{\epsilon, \, n}(\mu)$ the \em $\epsilon$-pseudo basin of $\mu$ up to time $n$.\em

\end{Def}
In the sequel we denote by $\Leb$   the Lebesgue measure of $M$, renormalized to be a probability measure.
\begin{Def} \em
\em \label{definitionWeakPseudoPhysicalMeasure}  \textsc { (Physical, Pseudo-Physical and Weak Pseudo-Phys\-ical Measures)}

  Let $\mu \in {\mathcal M}$.
We call $\mu$  \em physical \em if $\Leb(B(\mu)) >0$.

  We call $\mu$   \em pseudo-physical \em  if $\Leb(A_{\epsilon}(\mu)) >0$ for all $\epsilon >0$.

  We call $\mu$   \em weak pseudo-physical \em if
\begin{equation} \label{eqnL12} \limsup_{n \rightarrow + \infty}\frac{1}{n} \log \Leb (A_{\epsilon, \, n}(\mu)) = 0 \ \ \ \forall \ \epsilon >0. \end{equation}
We denote   $${\mathcal P}_f := \big\{\mu \in {\mathcal M}\colon \mu \mbox{ is weak pseudo-physical} \big\}. $$
\end{Def}

On the one hand, it is standard to check that for continuous mappings $f: M \mapsto M$  the physical measures, if they exist, are $f$-invariant. Also, pseudo-physical are $f$-invariant (see \cite{CE}), page 153, and  as proved in Theorem 1.3 of \cite{CE},     the set of pseudo-physical measures is never empty, weak$^*$ compact and independent of the chosen metric $\mbox{dist}^*$ that induces the weak$^*$ topology of ${\mathcal M}$. Besides, it is immediate to check that physical measures, if they exist, are particular cases of the always existing pseudo-physical measures.

On the other hand,  in this paper we  will  generalize the previous results that hold for pseudo-physical measures, by proving the following properties also for weak pseudo-physical measures:

 \begin{MainTheorem}
 \label{propositionWeaklyPseudoPhysicalMeasures} Let $f: M \mapsto M$ be a continuous map. Then:

 \noindent \textsc {a) } Weak pseudo-physical measures are  $f$-invariant.

\noindent \textsc {b) } Physical measures and pseudo-physical measures are particular cases of weak pseudo-physical measures.

\noindent \textsc {c) } Weak pseudo-physical measures  do always exist.

\noindent \textsc {d) } The set ${\mathcal P}_f$ of weak pseudo-physical measures does not depend on the choice of the metric $\mbox{dist}^*$ that induces the weak$^*$ topology on ${\mathcal M}$.

\noindent \textsc {e) } ${\mathcal P}_f$ is weak$^*$-compact, hence sequentially compact.

\noindent \textsc {f) }   $ \displaystyle \lim _{n \rightarrow + \infty} \mbox{dist}^*(\sigma_n(x), {\mathcal P}_f)= 0 $  for Lebesgue almost all $x \in M$.

\noindent \textsc {g) } If the weak pseudo-physical measure $\mu$ is unique, then it is physical  and its basin of attraction $B(\mu)$ covers Lebesgue a.e. $x \in M$.

\end{MainTheorem}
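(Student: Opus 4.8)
The plan is to isolate two elementary facts and then run a single compactness-plus-Borel--Cantelli argument that yields most of the parts. The first is the \emph{inclusion lemma}: whenever $\mbox{dist}^*(\mu,\nu)<\delta$, the triangle inequality applied to $\sigma_n(x)$ gives $A_{\epsilon,n}(\nu)\subseteq A_{\epsilon+\delta,n}(\mu)$. The second is the \emph{near-invariance} of the empirics: for every continuous $\phi$ one has $\int\phi\,d(f_*\sigma_n(x))-\int\phi\,d\sigma_n(x)=\frac1n\big(\phi(f^n(x))-\phi(x)\big)\to 0$, so $f_*\sigma_n(x)$ and $\sigma_n(x)$ always share the same weak$^*$ limits. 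I also use throughout that $\Leb(A_{\epsilon,n}(\mu))\le 1$, so the $\limsup$ in \eqref{eqnL12} is automatically $\le 0$; being weak pseudo-physical therefore means exactly that it is $\ge 0$ for every $\epsilon$.

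I would prove b) first. For a physical $\mu$, split $B(\mu)$ into the nested sets $\{x\in B(\mu):\mbox{dist}^*(\sigma_m(x),\mu)<\epsilon\ \forall m\ge N\}$, whose union is $B(\mu)$; some such set has positive measure and is contained in $A_{\epsilon,n}(\mu)$ for all large $n$, forcing $\frac1n\log\Leb(A_{\epsilon,n}(\mu))\to 0$. For a pseudo-physical $\mu$, observe that $A_{\epsilon'}(\mu)\subseteq\limsup_n A_{\epsilon,n}(\mu)$ for $\epsilon'<\epsilon$, so exponential decay of $\Leb(A_{\epsilon,n}(\mu))$ would make this $\limsup$ set Lebesgue-null by Borel--Cantelli, contradicting $\Leb(A_{\epsilon'}(\mu))>0$; hence the $\limsup$ in \eqref{eqnL12} is $\ge 0$.

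The core is parts c) and f), which I would prove together. Fix $\rho>0$ and let $K$ be the set of measures at $\mbox{dist}^*$-distance $\ge\rho$ from $\mathcal{P}_f$, a closed, hence compact, subset of $\mathcal{M}$. Each $\nu\in K$ violates \eqref{eqnL12}, so there are $\epsilon_\nu,c_\nu>0$ with $\Leb(A_{\epsilon_\nu,n}(\nu))\le e^{-c_\nu n}$ for all large $n$. The balls $B(\nu,\epsilon_\nu/2)$ cover $K$; take a finite subcover $B(\nu_1,\epsilon_{\nu_1}/2),\dots,B(\nu_k,\epsilon_{\nu_k}/2)$ and put $c=\min_i c_{\nu_i}$. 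Since $\{x:\sigma_n(x)\in B(\nu_i,\epsilon_{\nu_i}/2)\}=A_{\epsilon_{\nu_i}/2,n}(\nu_i)\subseteq A_{\epsilon_{\nu_i},n}(\nu_i)$, the set $\{x:\sigma_n(x)\in K\}$ has Lebesgue measure $\le k\,e^{-cn}$ for all large $n$. This is summable, so Borel--Cantelli gives, for Lebesgue-a.e.\ $x$, that $\sigma_n(x)\in K$ only finitely often, i.e.\ $\mbox{dist}^*(\sigma_n(x),\mathcal{P}_f)<\rho$ eventually; intersecting over $\rho=1/m$ proves f). Running the identical estimate with $\mathcal{P}_f=\emptyset$, so that $K=\mathcal{M}$, forces $1=\Leb(M)\le k\,e^{-cn}\to 0$, a contradiction, which gives c). I expect this uniformization to be the main obstacle: turning the $\nu$-by-$\nu$, $\epsilon_\nu$-dependent failure of \eqref{eqnL12} into a single summable Lebesgue bound valid simultaneously on a whole neighborhood of $K$, for which the inclusion lemma and the finite subcover are essential.

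The remaining parts are then short. For a), weak pseudo-physicality forces $A_{\epsilon,n}(\mu)\neq\emptyset$ for infinitely many $n$ (else the $\limsup$ in \eqref{eqnL12} is $-\infty$), so with $\epsilon=1/m$ I obtain $x_m$ and $n_m\to\infty$ with $\sigma_{n_m}(x_m)\to\mu$; near-invariance gives $f_*\sigma_{n_m}(x_m)\to\mu$, while weak$^*$-continuity of $f_*$ gives $f_*\sigma_{n_m}(x_m)\to f_*\mu$, so $f_*\mu=\mu$. For d), the identity map between $\mathcal{M}$ equipped with any two metrics inducing the weak$^*$ topology is a homeomorphism of compact metric spaces, hence uniformly continuous both ways, and the inclusion lemma transfers \eqref{eqnL12} between them. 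For e), if $\mu_k\to\mu$ with $\mu_k\in\mathcal{P}_f$, choosing $k$ with $\mbox{dist}^*(\mu_k,\mu)<\epsilon/2$ the inclusion lemma gives $A_{\epsilon/2,n}(\mu_k)\subseteq A_{\epsilon,n}(\mu)$, whence the $\limsup$ for $\mu$ is $\ge 0$ and $\mu\in\mathcal{P}_f$; thus $\mathcal{P}_f$ is closed in the compact metrizable space $\mathcal{M}$, hence compact and sequentially compact. Finally g) is immediate from f): if $\mathcal{P}_f=\{\mu\}$ then $\mbox{dist}^*(\sigma_n(x),\mathcal{P}_f)=\mbox{dist}^*(\sigma_n(x),\mu)$, so $\sigma_n(x)\to\mu$ for Lebesgue-a.e.\ $x$, giving $\Leb(B(\mu))=1$ and the physicality of $\mu$.
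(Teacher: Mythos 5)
Your proof is correct. For parts \textsc{a)}, \textsc{b)}, \textsc{d)}, \textsc{e)} and \textsc{g)} you follow essentially the same route as the paper: the invariance argument via limits of empiric measures and the $O(1/n)$ total-variation discrepancy between $\sigma_n(x)$ and $f_*\sigma_n(x)$, the Borel--Cantelli contradiction for pseudo-physical $\Rightarrow$ weak pseudo-physical, and the inclusion $A_{\epsilon/2,n}(\mu_j)\subseteq A_{\epsilon,n}(\mu)$ for closedness and metric-independence. The genuine divergence is in parts \textsc{c)} and \textsc{f)}: the paper does not prove these directly, but imports Theorems 1.3 and 1.5 of \cite{CE} (existence of pseudo-physical measures and convergence of $\sigma_n(x)$ to that set, Lebesgue-a.e.) and then uses the inclusion of pseudo-physical measures into ${\mathcal P}_f$ from part \textsc{b)}. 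You instead give a self-contained argument: cover the compact set $K=\{\nu:\mbox{dist}^*(\nu,{\mathcal P}_f)\ge\rho\}$ by finitely many balls on which the exponential decay of $\Leb(A_{\epsilon_\nu,n}(\nu))$ is uniform, sum, and apply Borel--Cantelli; the degenerate case $K={\mathcal M}$ then yields nonemptiness. This is the same compactness-plus-covering mechanism that \cite{CE} uses for pseudo-physical measures, but adapted to the exponential-rate definition, so your version buys independence from the external reference (and in fact reproves the \cite{CE} statements in this weaker setting) at the cost of a longer argument; the paper's version buys brevity at the cost of relying on the stronger pseudo-physical results. Minor remarks: your separate treatment of physical measures in \textsc{b)} is redundant given physical $\Rightarrow$ pseudo-physical, and in \textsc{d)} pointwise continuity of the identity at $\mu$ already suffices (uniform continuity via compactness is harmless overkill); neither affects correctness.
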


\begin{Rem} \em
\em \label{RemarkPseudoPhysicalNoErgodica} Weak pseudo-physical measures \em are not necessaritly ergodic  \em (see   Example 5.4 of \cite{CE}).
\end{Rem}

Now, let $f \in \mbox{Diff}^1(M)$ be a $C^1$ diffeomorphism on $M$.

\begin{Def}
\label{DefinitionAnosov}  \textsc { (Anosov diffeomorphisms)}
The diffeomorphism $f$ is called \em Anosov \em if there exists a Riemannian metric of $M$ and asplitting $TM= E \oplus F$ which   is continuous and non trivial (i.e. $\mbox{dim}(E), \mbox{dim}(F) \neq 0$), and   a constant $\lambda <1$, such that
\begin{equation} \label{eqnL11} {\|Df_x|_{E(x)}\|} , \ \ \  { \|Df^{-1}_x|_{F(x)}\|}\leq \lambda \ \  \forall \ x\in M.\end{equation}
We call $E$ and $F $ the \em stable and unstable subbundles \em respectively. We call $\lambda$ the \em (uniform) hyperbolicity constant. \em
\end{Def}

 \begin{Rem} \em
 \label{RemarkAnosov} \em We observe that the condition of continuity of the unstable and stable subbundles is redundant in Definition \ref{DefinitionAnosov}. Besides, since the manifold is connected, from the continuity of $F$ and $E$ we deduce that they are uniformly transversal sub-bundles and  $\mbox{dim}(F)$ and $\mbox{dim}E$ are constants.

 From inequalities (\ref{eqnL11}), for any Anosov diffeomorphism $f$ and for any regular point $x \in M$, the minimum Lyapunov exponent along $F(x)$ is not smaller  than $\log \lambda^{-1} >0$, and  the maximum Lyapunov exponent along $E$ is not larger than $\log \lambda < 0$. Thus, for any regular point $x \in M$    all the Lyapunov exponents along $E(x)$ are strictly negative and bounded away from zero, and all the Lyapunov exponents along $F(x)$ are strictly positive and bounded away from zero. \end{Rem}

\begin{Def} \em
 \em \label{definitionPesinFormula}  \textsc { (Pesin's Entropy Formula)}
 Let $f \in \mbox{Diff}^1(M)$. Let $\mu \in {\mathcal M}_f$. We say that $\mu$ \em satisfies Pesin's Entropy Formula \em if
 $$h_{\mu}(f) = \int \sum_{i=1}^{   \mbox{\footnotesize dim}(M)} \chi^+_i(x)\, d \mu,$$
 where $h_{\mu}(f)$ is the metric entropy of $f$ with respect to $\mu$; for $\mu$-a.e. $x \in M$ the Lyapunov exponents of the orbit of $x$ are denoted by $$\chi_1(x) \geq \chi_2(x) \geq \ldots \geq       \chi_{\dim M}(x);$$ and   $\chi_i^+(x):= \max\{\chi_i(x), 0\}$.
\end{Def}

Recall that for  any $C^1$- Anosov diffeomorphism $f$,  the set of measures that satisfy Pesin's Entropy Formula is nonempty  (see for example Theorems 4.2.3 and 4.5.6 of \cite{Keller}).

\vspace{.3cm}

The main purpose of this paper is to prove the following result:

\begin{MainTheorem}
\label{mainTheo} For $C^1$ Anosov diffeomorphisms, the set of ergodic weak pseudo-physical measures is nonempty, and the set of invariant probability measures that satisfy Pesin's Entropy Formula is its closed convex hull.

\vspace{.5cm}

\em

 \noindent The following is an equivalent restatement of Theorem \ref{mainTheo}:

 \vspace{.2cm}

 \em

\noindent  \textsc {a) } All the weak pseudo-physical measures satisfy Pesin's Entropy Formula.

\noindent  \textsc {b) } Any  invariant probability measure  $\mu$ satisfies    Pesin's Entropy Formula if and only if its ergodic components $\mu_x$ are weak pseudo-physical $\mu$-a.e. $x \in M$. \em

\end{MainTheorem}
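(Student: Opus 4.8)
The plan is to deduce the whole statement from one estimate for the exponential decay rate
$$a(\mu)=\lim_{\epsilon\to0^+}\limsup_{n\to+\infty}\frac{\log\mbox{Leb}(A_{\epsilon,n}(\mu))}{n},$$
i.e. from Theorem \ref{mainTheorem3}, organizing the consequences through the ergodic decomposition. First I would record two elementary reductions. Since $\mbox{Leb}(A_{\epsilon,n}(\mu))\le1$, each inner $\limsup_n$ is $\le0$, and since $A_{\epsilon,n}(\mu)$ increases with $\epsilon$ the inner $\limsup_n$ is monotone in $\epsilon$; hence $a(\mu)$ is the infimum over $\epsilon>0$ of these quantities, and therefore $\mu\in\mathcal{P}_f$ (weak pseudo-physical in the sense of (\ref{eqnL12})) if and only if $a(\mu)=0$, for every $\mu\in\mathcal{M}$, ergodic or not. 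Second, writing $\phi(x):=\log\bigl|\det\bigl(Df_x|_{F(x)}\bigr)\bigr|$, which is continuous because $F$ and $Df$ are continuous, for Anosov $f$ one has $\int\sum_i\chi_i^+\,d\mu=\int\phi\,d\mu$, so the Pesin defect $D(\mu):=h_\mu(f)-\int\phi\,d\mu$ is weak$^*$ upper semicontinuous (entropy is u.s.c. because Anosov maps are expansive, and $\mu\mapsto\int\phi\,d\mu$ is weak$^*$-continuous). By Ruelle's inequality $D(\mu)\le0$, and by affineness of both terms under the ergodic decomposition $\mu=\int\mu_x\,d\mu(x)$ one gets $D(\mu)=\int D(\mu_x)\,d\mu(x)$ with $D(\mu_x)\le0$; hence $\mu$ satisfies Pesin's formula (Definition \ref{definitionPesinFormula}) if and only if $\mu_x$ does for $\mu$-a.e. $x$.

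The engine of the argument is Theorem \ref{mainTheorem3}, the equality $a(\nu)=D(\nu)$ for ergodic $\nu$. Once it is established, the two reductions give at once the key equivalence for ergodic $\nu$:
$$\nu\in\mathcal{P}_f\iff a(\nu)=0\iff D(\nu)=0\iff\nu\ \text{satisfies Pesin's formula}.$$
Thus the ergodic weak pseudo-physical measures are exactly the ergodic measures satisfying Pesin's formula; their nonemptiness follows because at least one measure $\mu_0$ satisfying Pesin's formula exists (cited after Definition \ref{definitionPesinFormula}), and by the first paragraph $\mu_0$-a.e. ergodic component satisfies Pesin's formula, so some ergodic Pesin (hence ergodic weak pseudo-physical) measure exists.

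The substance is therefore Theorem \ref{mainTheorem3}, which splits into two inequalities for ergodic $\nu$. The upper bound $a(\nu)\le D(\nu)$ is the method of \cite{CCE}: covering $A_{\epsilon,n}(\nu)$ by a Fubini decomposition of $\mbox{Leb}$ along the local $C^1$ pseudo-unstable foliations built from Ma\~n\'e's Hadamard graphs of bounded future dispersion, one bounds the transverse measure of the plaques meeting $A_{\epsilon,n}(\nu)$ by the number of distinguishable $n$-orbits (controlled by $h_\nu$) times the unstable-Jacobian contraction (controlled by $\int\phi\,d\nu$), yielding exponential decay at rate at most $D(\nu)$. This needs no ergodicity, so it also proves Part \textsc{a} directly: $\mu\in\mathcal{P}_f\Rightarrow a(\mu)=0\Rightarrow D(\mu)\ge0\Rightarrow D(\mu)=0$. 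The reverse bound $a(\nu)\ge D(\nu)$ is the new and hardest step: one must \emph{produce} a subset of $A_{\epsilon,n}(\nu)$ of Lebesgue measure at least $e^{n(D(\nu)-o(1))}$. I would fix a generic pseudo-unstable plaque, use Birkhoff's theorem on the base together with the entropy to count $\approx e^{nh_\nu}$ plaques along which empirical measures stay within $\epsilon$ of $\nu$, and bound from below the Lebesgue measure they sweep out via the pseudo-unstable Jacobians, whose logarithmic averages approximate $\int\phi\,d\nu$. The main obstacle is precisely that in the $C^1$ category the holonomies of the genuine unstable foliation need not be absolutely continuous, so there is no honest Fubini along unstable manifolds; the remedy is to carry the entire lower estimate on the $C^1$ pseudo-unstable foliations with uniformly bounded dispersion and distortion, controlling the Jacobians uniformly and patching the local counts into a single global Lebesgue lower bound while keeping all empirical measures $\epsilon$-close to $\nu$.

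Finally I would assemble the global statement. Part \textsc{a} is already obtained. For Part \textsc{b}, the first paragraph gives $\mu$ satisfies Pesin's formula $\iff$ $\mu_x$ does $\mu$-a.e. $\iff$ (ergodic equivalence) $\mu_x\in\mathcal{P}_f$ $\mu$-a.e.; the converse implication uses $D(\mu)=\int D(\mu_x)\,d\mu(x)$. For the closed-convex-hull formulation, let $\mathcal{E}$ be the set of ergodic weak pseudo-physical measures and $\mathcal{H}$ the set of invariant measures satisfying Pesin's formula. Since $D$ is affine and upper semicontinuous with $D\le0$, the set $\mathcal{H}=\{D\ge0\}$ is weak$^*$-closed and convex and contains $\mathcal{E}$, whence $\overline{\mathrm{conv}}(\mathcal{E})\subseteq\mathcal{H}$. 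Conversely any $\mu\in\mathcal{H}$ is the barycenter of its ergodic decomposition, which by the first paragraph is concentrated on $\mathcal{E}\subseteq\overline{\mathrm{conv}}(\mathcal{E})$; as the barycenter of a probability concentrated on a weak$^*$-closed convex set lies in that set, $\mu\in\overline{\mathrm{conv}}(\mathcal{E})$. Hence $\mathcal{H}=\overline{\mathrm{conv}}(\mathcal{E})$, which is Theorem \ref{mainTheo}.
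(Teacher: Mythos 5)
Your proposal is correct and follows essentially the same route as the paper: reduce everything to the equality $a(\nu)=D(\nu)$ for ergodic $\nu$, take the upper bound from \cite{CCE}, and prove the new lower bound by a Fubini decomposition of $\mbox{Leb}$ along Ma\~n\'e-type $C^1$ pseudo-unstable foliations with uniform Jacobian control, combined with an entropy count of the pieces on which empirical measures stay $\epsilon$-close to $\nu$ (the paper realizes that count via dynamical rectangles of a Markov partition, expansivity, and a counting lemma for $\#\{Y\in{\mathcal R}_n: Y\cap A\neq\emptyset\}$). The ergodic-decomposition and closed-convex-hull bookkeeping you give matches the paper's use of upper semicontinuity of entropy and affineness of the Pesin defect.
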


From Theorem \ref{mainTheo}, we obtain the following consequence:

\begin{Cor}
\label{corollaryC2-BonattiDiazViana}
If $f \in \mbox{\em Diff}^{1}(M)$ is Anosov, then
for Lebesgue-almost all $x \in M$  any convergent subsequence of the empirical probabilities $\sigma_n(x)$ converges to a measure that satisfies Pesin's Entropy Formula.
\end{Cor}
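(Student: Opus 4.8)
The plan is to deduce the corollary directly from part (a) of Theorem \ref{mainTheo} together with parts (e) and (f) of Theorem \ref{propositionWeaklyPseudoPhysicalMeasures}, with no new analytic input. First I would invoke part (f), which furnishes a full-Lebesgue-measure set $R \subseteq M$ such that $\lim_{n \rightarrow +\infty} \mbox{dist}^*(\sigma_n(x), {\mathcal P}_f) = 0$ for every $x \in R$, where $\mbox{dist}^*(\cdot, {\mathcal P}_f)$ denotes the distance (under the fixed metric $\mbox{dist}^*$) from a measure to the set ${\mathcal P}_f$ of weak pseudo-physical measures. Since ${\mathcal P}_f$ is nonempty by part (c), this distance functional is well defined, and being the infimum of $\mbox{dist}^*(\cdot,\mu)$ over $\mu \in {\mathcal P}_f$ it is $1$-Lipschitz, hence continuous, on ${\mathcal M}$.

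Next I would fix $x \in R$ and an arbitrary convergent subsequence $\sigma_{n_k}(x) \to \nu$ in the weak$^*$ topology. Passing to the limit along the subsequence and using continuity of the distance functional gives
$$\mbox{dist}^*(\nu, {\mathcal P}_f) = \lim_{k \rightarrow +\infty} \mbox{dist}^*(\sigma_{n_k}(x), {\mathcal P}_f) = 0.$$
By part (e), ${\mathcal P}_f$ is weak$^*$-compact, hence closed, so vanishing distance forces $\nu \in {\mathcal P}_f$. Thus for each $x \in R$ every accumulation point of the sequence $\{\sigma_n(x)\}$ is itself a weak pseudo-physical measure.

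Finally I would apply part (a) of Theorem \ref{mainTheo}, which states that every weak pseudo-physical measure satisfies Pesin's Entropy Formula. Combining the two previous steps: for each $x$ in the full-Lebesgue-measure set $R$, any subsequential weak$^*$ limit $\nu$ of the empirical probabilities $\sigma_n(x)$ lies in ${\mathcal P}_f$ and therefore satisfies Pesin's Entropy Formula. This is precisely the assertion of the corollary.

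I expect no genuine obstacle here, since all the substantive work is already packaged in the two theorems being cited. The only point requiring care is the interchange of the subsequential limit with the distance-to-a-set functional, which is legitimate by the continuity of that functional together with the closedness of ${\mathcal P}_f$; this is exactly where part (e) of Theorem \ref{propositionWeaklyPseudoPhysicalMeasures} is indispensable, for without closedness a subsequential limit could a priori escape ${\mathcal P}_f$ even though its distance to ${\mathcal P}_f$ vanishes.
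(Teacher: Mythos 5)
Your argument is correct and follows essentially the same route as the paper's own proof: deduce from part \textsc{f}) of Theorem \ref{propositionWeaklyPseudoPhysicalMeasures} that every subsequential limit of $\sigma_n(x)$ is weak pseudo-physical for Lebesgue-a.e.\ $x$, then apply part \textsc{a}) of Theorem \ref{mainTheo}. You merely make explicit the use of compactness (part \textsc{e})) and the continuity of the distance-to-set functional, which the paper leaves implicit.
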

\begin{proof}
 From Assertion  \textsc{f}) of Theorem \ref{propositionWeaklyPseudoPhysicalMeasures}, for Lebesgue-almost all $x \in M$  any convergent subsequence of $\{\sigma_n(x)\}_{n \geq 1}$ converges to a weak pseudo-physical  measure $\mu$. Thus, applying part  \textsc{a}) of Theorem \ref{mainTheo} $\mu$ satisfies Pesin's Entropy Formula.
\end{proof}

The arguments to prove Theorem \ref{mainTheo} are based in the following more general result, which we will prove along the paper:

\begin{MainTheorem}
\label{mainTheorem3} If $f \in \mbox{\em Diff}^1(M)$ is   Anosov, if $F$ denotes its unstable sub-bundle, and if $\mu$ is an ergodic probability measure for $f$, then the $\epsilon$-pseudo basin  $A_{\epsilon, n}(\mu)$ of $\mu$ up to time $n$ satisfies the following equality: \em
\begin{equation}
\label{eqnL50} \lim_{\epsilon \rightarrow 0^+} \limsup_{n \rightarrow + \infty} \frac{\log \Leb (A_{\epsilon, \, n}(\mu))}{n} = h_{\mu}(f) - \int \log|\det Df |_{F } \, d \mu.
\end{equation}
\end{MainTheorem}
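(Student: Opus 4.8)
The plan is to sandwich the quantity $\limsup_{n}\frac1n\log\Leb(A_{\epsilon,n}(\mu))$ between two expressions that both converge to the right-hand side of (\ref{eqnL50}) as $\epsilon\to0^+$. Write $\phi(x):=\log\big|\det(Df_x|_{F(x)})\big|$, a continuous function on $M$ since $f\in\text{Diff}^1(M)$ and $F$ is continuous; by Oseledets' theorem $\int\phi\,d\mu=\int\sum_i\chi^+_i\,d\mu$, so the right-hand side of (\ref{eqnL50}) equals $R:=h_\mu(f)-\int\phi\,d\mu$, which is $\le 0$ by Ruelle's inequality. The guiding heuristic is that, up to subexponential factors, $A_{\epsilon,n}(\mu)$ is a disjoint union of $e^{nh_\mu(f)}$ dynamical (Bowen) tubes, each of Lebesgue measure $e^{-n\int\phi\,d\mu}$: the first factor counts the distinguishable length-$n$ orbit segments whose empirical distribution is near $\mu$, and the second is the volume contraction that the unstable expansion $\phi$ forces on such a tube. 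A convenient first step is that the second factor is pinned uniformly on the whole set: for every $x\in A_{\epsilon,n}(\mu)$,
$$\frac1n\sum_{j=0}^{n-1}\phi(f^jx)=\int\phi\,d\sigma_n(x)\in\Big(\int\phi\,d\mu-\omega(\epsilon),\ \int\phi\,d\mu+\omega(\epsilon)\Big),$$
where $\omega(\epsilon)\to0$ is the modulus of continuity of $\nu\mapsto\int\phi\,d\nu$ for $\text{dist}^*$ near $\mu$. Thus the unstable Jacobian $\exp\big(\sum_{j=0}^{n-1}\phi(f^jx)\big)$ is $e^{\pm n\omega(\epsilon)}$-close to $e^{n\int\phi\,d\mu}$, uniformly over $x\in A_{\epsilon,n}(\mu)$; this is the mechanism by which the $\epsilon$-pseudo-basin condition controls the unstable volume growth.

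For the upper bound I would use that an Anosov diffeomorphism is expansive, so for $\delta$ below the expansivity constant any maximal $(n,\delta)$-separated set $S_n\subset A_{\epsilon,n}(\mu)$ is finite and the balls $B_n(x,\delta)$, $x\in S_n$, cover $A_{\epsilon,n}(\mu)$. Upper semicontinuity of metric entropy (also from expansiveness), together with the localization of the variational principle to the weak$^*$ neighbourhood $\{\nu:\text{dist}^*(\nu,\mu)<\epsilon\}$, gives $\#S_n\le e^{n(h_\mu(f)+\gamma(\epsilon))}$ with $\gamma(\epsilon)\to0$. Each Bowen ball satisfies $\Leb(B_n(x,\delta))\le C\,e^{-n(\int\phi\,d\mu-\omega(\epsilon))}$, since forward iteration contracts the stable directions $E$ (keeping the stable width $O(\delta)$) while the $\dim F$-volume of admissible unstable displacements is bounded using the uniform Jacobian estimate above. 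Multiplying and taking $\frac1n\log$ bounds the $\limsup$ from above by $R+\gamma(\epsilon)+\omega(\epsilon)$; this half is essentially the estimate already contained in \cite{CCE}.

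The lower bound is the new and harder ingredient. Since $\mu$ is ergodic, $\mu$-a.e.\ $x$ has $\sigma_n(x)\to\mu$ and $\frac1n\sum_j\phi(f^jx)\to\int\phi\,d\mu$, so Katok's entropy formula provides, for each $\gamma>0$ and all large $n$, an $(n,\delta)$-separated set $S_n\subset A_{\epsilon/2,n}(\mu)$ with $\#S_n\ge e^{n(h_\mu(f)-\gamma)}$. For $x\in S_n$, a point shadowing its orbit within $\delta$ up to time $n$ has empirical measure within $\epsilon$ of $\mu$, so $B_n(x,\delta)\subset A_{\epsilon,n}(\mu)$; separation makes the family $\{B_n(x,\delta)\}_{x\in S_n}$ pairwise disjoint; and each tube has $\Leb(B_n(x,\delta))\ge c\,e^{-n(\int\phi\,d\mu+\omega(\epsilon))}$. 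Summing over the disjoint family,
$$\Leb(A_{\epsilon,n}(\mu))\ \ge\ \#S_n\cdot c\,e^{-n(\int\phi\,d\mu+\omega(\epsilon))}\ \ge\ c\,e^{\,n(R-\gamma-\omega(\epsilon))},$$
so the $\limsup$ is bounded below by $R-\gamma-\omega(\epsilon)$.

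The main obstacle, common to both halves, is the full-dimensional estimate of $\Leb(B_n(x,\delta))$ in the merely $C^1$ category. In $C^2$ one would decompose $\Leb$ by Fubini along the unstable foliation $W^u$, whose holonomy is absolutely continuous; in $C^1$ this fails \cite{RobinsonYoung} and $W^u$ need not even be $C^1$ \cite{Pugh}. I would therefore replace $W^u$ by the $C^1$ \emph{pseudo-unstable} foliation built from Ma\~n\'e's Hadamard graphs of bounded forward dispersion \cite{Mane}, as used for $C^1$ systems in \cite{SunTian, CCE}: its leaves are genuine $C^1$ disks that $C^0$-approximate the unstable leaves, its bounded dispersion controls the distortion so the tubes have the expected volume from both sides, its leaf-Jacobians approximate $e^{\phi}$ within a factor $e^{\pm\eta}$ with $\eta\to0$ as the approximation sharpens, and---being $C^1$---it has absolutely continuous holonomy, so the Fubini decomposition of $\Leb$ along it is legitimate. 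Carrying out the tube estimates on these leaves and then letting the auxiliary parameters $\delta,\eta,\gamma\to0^+$ and finally $\epsilon\to0^+$, all the error terms $\gamma(\epsilon),\omega(\epsilon),\eta,\gamma$ vanish and the two bounds coincide at $R=h_\mu(f)-\int\phi\,d\mu$, which is (\ref{eqnL50}).
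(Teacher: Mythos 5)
Your proposal is correct in substance and shares with the paper its one indispensable technical tool --- Ma\~{n}\'{e}'s $C^1$ pseudo-unstable foliations of bounded forward dispersion, along which a Fubini decomposition of $\Leb$ and a leaf-Jacobian comparison with $\log|\det Df|_F|$ are legitimate (this is Lemma \ref{lemmaFoliation} in the paper) --- but the combinatorial skeleton of your lower bound is genuinely different. You count with Katok's entropy formula: an $(n,\delta)$-separated set $S_n\subset A_{\epsilon/2,n}(\mu)$ of cardinality $e^{n(h_\mu(f)-\gamma)}$ whose Bowen tubes are disjoint and each carry volume $\geq c\,e^{-n(\int\phi\,d\mu+\omega(\epsilon))}$. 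The paper instead fixes a Markov partition of small diameter, shows that every dynamical rectangle $Y\in{\mathcal R}_n$ meeting a set $A_N$ of $\mu$-measure $\geq 1-\epsilon$ lies inside $A_{\epsilon,n}(\mu)$, bounds $\Leb(Y)$ from below by Fubini along the pseudo-unstable leaves, and counts such rectangles by an elementary information-theoretic lemma (Lemma \ref{lemmaEntropia}) combined with expansiveness, so that the Markov partition itself computes $h_\mu(f)$ via Kolmogorov--Sinai and Katok's formula is never invoked. What the Markov route buys is precisely the step you leave as an assertion: the uniform lower bound $\Leb^{f^n({\mathcal L}(z))}\big(f^n({\mathcal L}(z)\cap Y)\big)\geq K^{-1}$ comes for free from the product structure, because the iterated leaf crosses an entire rectangle of the partition and inequality (\ref{eqnL25}) applies; for a bare Bowen ball $B_n(x,\delta)$ you would have to prove separately that $f^n\big({\mathcal L}(z)\cap B_n(x,\delta)\big)$ contains a disk of uniform size, which is true but requires an argument in the $C^1$ setting where the leaves are only pseudo-invariant. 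Two smaller points to repair if you write this out: disjointness of the tubes requires radius $\delta/2$ (separated points only guarantee $B_n(x,\delta/2)\cap B_n(y,\delta/2)=\emptyset$), and the inclusion $B_n(x,\delta)\subset A_{\epsilon,n}(\mu)$ needs $\delta$ chosen from the uniform continuity of $y\mapsto\delta_y$ exactly as in assertion (\ref{eqnL35}) of the paper. The upper bound you defer to \cite{CCE}, which is exactly what the paper does (Theorem \ref{TheoremCCE1}).
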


\subsection{Organization of the paper}
In Section \ref{sectionPropertiesWeaklyPseudoPhysicalMeasures} we prove Theorem \ref{propositionWeaklyPseudoPhysicalMeasures}, which states the general properties of weak  pseudo physical measures for any continuous map $f: M \mapsto M$.

In Section \ref{sectionSufficientCondition}, for Anosov diffeomorphisms,  we prove part  \textsc{a}) of Theorem \ref{mainTheo} and also the first part of  \textsc{b}). Precisely, we prove that the weak pseudo-physical property of the ergodic components is a sufficient condition to satisfy Pesin's Entropy Formula.

In Section \ref{sectionNecessaryCondition},  for Anosov diffeomorphisms, we prove the converse statement in  part \textsc{b}) of  Theorem \ref{mainTheo}. Namely, the weak pseudo-physical property of the ergodic components is also a  necessary condition to satisfy Pesin's Entropy Formula.

Through the proof of Theorem \ref{mainTheo}, we obtain some stronger intermediate results that hold for any ergodic measure. Finally, at the end of Section \ref{sectionNecessaryCondition}, we join those intermediate results to  prove Theorem \ref{mainTheorem3}.


\section{Properties of the weak pseudo-physical measures}
\label{sectionPropertiesWeaklyPseudoPhysicalMeasures}

The purpose of this section is to prove Theorem \ref{propositionWeaklyPseudoPhysicalMeasures}. Along this section, we   assume that $f$ is only a continuous map from a compact Riemannian manifold $M$ into itself.

Let us divide the proof  of Theorem \ref{propositionWeaklyPseudoPhysicalMeasures}   into its assertions  \textsc{a}) to \textsc{f}):

\vspace{.3cm}

\noindent \textsc { Theorem \ref{propositionWeaklyPseudoPhysicalMeasures} a) } \em Any weak pseudo-physical measure $\mu$ is $f$-invariant. \em
\begin{proof}
 From Equality (\ref{eqnL12}), for any fixed value of $\epsilon >0$ there exists $n_j \rightarrow + \infty$  such that $\Leb(A_{\epsilon, \, n_j}(\mu)) >0$. Thus, there exists $x_j \in M$ such that \begin{equation} \label{eqnL13} \mbox{dist}^*(\sigma_{n_j}(x_j), \mu) < \epsilon.\end{equation}
Since $\sigma_{n_j}(x_j) \in {\mathcal M}$ and ${\mathcal M}$ is sequentially compact, it is not restrictive to assume that
$\{\sigma_{n_j}(x_j)\}$ is weak$^*$ convergent. Denote by $\nu$ its limit. We assert that $\nu$ is $f$-invariant. In fact, consider  the operator $f^*:{\mathcal M} \mapsto {\mathcal M}$ defined by $f^*(\nu) (B) = \nu(f^{-1}(B))$ for any Borel measurable set $B \subset M$. Then $f^* (\delta_y) = \delta_{f(y)}$ for all $y \in M$; hence $f^* (\sigma_{n_j}(x_j)) = \sigma_{n_j}(f(x_j))$ for all $j \in \mathbb{N}$.

It is well known that $f^*$ is continuous. Thus, taking limit in the weak$^*$ topology, we obtain:
$$f^*(\nu)  = \lim_{j \rightarrow + \infty} f^* (\sigma_{n_j}(x_j)) = \lim_{j \rightarrow + \infty} \sigma_{n_j}(f(x_j)). $$
Since $$\sigma_{n_j} (x_j) = \frac{1}{n_j} \sum_{i= 1}^{n_j - 1}\delta_{f^i(x_j)} , \ \ \ \ \ \sigma_{n_j} (f(x_j)) = \frac{1}{n_j} \sum_{i= 1}^{n_j - 1}\delta_{f^{i+1}(x_j)},$$
we deduce that the total variation of the signed measure $\sigma_{n_j}(x_j) - f^*(\sigma_{n_j}(x_j))$ is
$$|\sigma_{n_j}(x_j) - f^*(\sigma_{n_j}(x_j))| \leq \frac{1}{n_j} \Big( \delta_{x_j} + \delta_{f^{n_j}(x_j)}\Big).$$
Thus   $\lim_{j \rightarrow + \infty} \sigma_{n_j}(x_j)= \lim_{j \rightarrow + \infty} f^*(\sigma_{n_j}(x_j)), $
hence
 $\nu = f^*(\nu)$, or equivalently $\nu$ is $f$-invariant.

From (\ref{eqnL13}) $\mbox{dist}^*(\nu, \mu) \leq \epsilon$. We have proved that for all $\epsilon >0$ there exists $\nu \in {\mathcal M}_f$ such that $\mbox{dist}^*(\mu, \nu) \leq \epsilon$. Since ${\mathcal M}_f$ is sequentially compact, we deduce that $\mu \in {\mathcal M}_f$, as wanted. \end{proof}

\vspace{.3cm}

\noindent \textsc { Theorem \ref{propositionWeaklyPseudoPhysicalMeasures} b) } \em Any physical or pseudo-physical measure is weak pseudo-physical. \em
\begin{proof}
Trivially any physical measure is pseudo-physical.  So, it is only left to prove  that any pseudo-physical measure $\mu$ is weak pseudo-physical. Consider $x \in A_{\epsilon}(\mu)$. From equality (\ref{eqnL15}), there exists $n_j \rightarrow + \infty$ such that $\mbox{dist}^*(\sigma_{n_j}(x), \mu) < \epsilon$. Therefore, from  (\ref{eqnL16})  $x \in \bigcap_{N  \geq 1} \bigcup_{n \geq N} A_{\epsilon, \, n}(\mu). $ Since the latter assertions holds for all $x \in A_{\epsilon}(\mu)$, we have proved that
$$A_{\epsilon}(\mu) \subset \bigcap_{N  \geq 1} \bigcup_{n \geq N} A_{\epsilon, \, n}(\mu).$$
As $\mu$ is pseudo-physical, we deduce:
\begin{equation} \label{eqnL17} \Leb\Big(\bigcap_{N  \geq 1} \bigcup_{n \geq N} A_{\epsilon, \, n}(\mu) \Big) \geq \Leb \Big(A_{\epsilon}(\mu) \Big) >0 \ \ \forall \epsilon >0.\end{equation}

Now, assume by contradiction, that $\mu$ is not weak pseudo-physical. Taking into account that $\Leb(A_{\epsilon, \, n}(\mu)) \leq 1$, from the contrary of equality (\ref{eqnL12}), we deduce that there exist  $\epsilon >0$ and $a >0$ such that
$$\limsup_{n \rightarrow + \infty}\frac{1}{n} \log \Leb (A_{\epsilon, \, n}(\mu)) = -2a < 0.$$
Therefore, there exists $N \geq 0$ such that
 $ \displaystyle \Leb(A_{\epsilon, \, n}(\mu) \leq e^{-an} $ for all $  n \geq N,$
from where we deduce that
 $\displaystyle \sum_{n= 1}^{+ \infty} \Leb(A_{\epsilon, \, n}(\mu) < + \infty.$
Finally, applying Borell-Cantelli Lemma, we conclude that
 $\displaystyle \Leb\Big(\bigcap_{N  \geq 1} \bigcup_{n \geq N} A_{\epsilon, \, n}(\mu) \Big) =0,$
contradicting inequality (\ref{eqnL17}).
\end{proof}

\vspace{.3cm}

\noindent \textsc { Theorem \ref{propositionWeaklyPseudoPhysicalMeasures} c) } \em Weak pseudo-physical measures do exist. \em
\begin{proof}
In Theorem 1.3 of \cite{CE}, it is proved   for any continuous map  $f$ on a compact manifold, that the pseudo-physical measures  (which in that paper are also called SRB-like or observable)  do   exist. Since any pseudo-physical measure is weak pseudo-physical,  these latter measures   always exist.\end{proof}

\vspace{.3cm}

\noindent \textsc { Theorem \ref{propositionWeaklyPseudoPhysicalMeasures} d) } \em The set ${\mathcal P}_f$ of weak pseudo-physical measures does not depend on the choice of the metric  in ${\mathcal M}$ that induces the weak$^*$ topology. \em
\begin{proof}
Take two metrics $\mbox{dist}^*_1$ and $\mbox{dist}^*_2$, both inducing the weak$^*$ topology on ${\mathcal M}$. We assume that $\mu $ is weak pseudo-physical according to $\mbox{dist}^*_1$, and let us prove that it is also weak pseudo-physical according to  $\mbox{dist}^*_2$.

Since both metric induce the same topology,
for any $\epsilon >0$ there exists $\delta >0$ such that
\begin{equation} \label{eqnL19} \rho \in {\mathcal M}, \ \ \mbox{dist}^*_1(\rho, \mu) < \delta \ \ \Rightarrow \ \ \mbox{dist}^*_2(\rho, \mu) < \epsilon.\end{equation}
In the notation of equality (\ref{eqnL16}), add a subindex 1 or 2  to denote the sets $A_{\cdot, \, n, \ 1}(\mu)$ and $A_{\cdot, \, n, \ 2}(\mu)$, according to which metric ($\mbox{dist}^*_1$  and $\mbox{dist}^*_2$, respectively) is used to define them.  So, from assertion (\ref{eqnL19}) we have:
 $A_{\delta, \, n, \ 1}(\mu) \subset A_{\epsilon, \, n, \ 2},$
from where
 $$\Leb \Big(A_{\delta, \, n, \ 1}(\mu)  \Big) \leq \Leb \Big(   A_{\epsilon, \, n, \ 2}(\mu)\Big).$$

Since we are assuming that $\mu$ is weak pseudo-physical according to $\mbox{dist}_1^*$,   from equality (\ref{eqnL12}) we know that
$$\limsup_{n \rightarrow + \infty} \frac{\log \Leb(A_{\delta, \, n, \ 1}(\mu))}{n} = 0.$$
Then, $$\limsup_{n \rightarrow + \infty} \frac{\log \Leb(A_{\epsilon, \, n, \ 2}(\mu))}{n} \geq 0. $$ As $\epsilon >0$ was arbitrarily chosen, the latter inequality holds for all $\epsilon >0$.
But the   limit in the latter inequality is non positive because $\Leb$ is a probability measure. We conclude that
$$\limsup_{n \rightarrow + \infty} \frac{\log \Leb(A_{\epsilon, \, n,\ 2}(\mu))}{n} = 0 \ \ \ \ \forall \ \epsilon >0, $$
ending the proof that $\mu$ is also weak pseudo-physical with respect to the metric $\mbox{dist}^*_2$.
\end{proof}

\vspace{.3cm}

\noindent \textsc { Theorem \ref{propositionWeaklyPseudoPhysicalMeasures} e) } \em The set ${\mathcal P}_f $ of weak pseudo-physical measures is weak$^*$-compact.
 \em
\begin{proof}
Since ${\mathcal P}_ f \subset {\mathcal M}$ and ${\mathcal M}$ is weak$^*$-compact, it is enough to prove that ${\mathcal P}_f$ is weak$^*$-closed. Assume $\mu_j \in {\mathcal P}_f$ and $\mu \in {\mathcal M}$ such that
$$\lim_{j \rightarrow +\infty} \mbox{dist}^* (\mu_j, \mu) = 0 $$
We will prove that $\mu \in {\mathcal P}_f$. For any given $\epsilon >0$, choose and fix $j$ such that $\mbox{dist}^*(\mu_j, \mu) < \epsilon/2$.  Thus, from equality (\ref{eqnL16}) and the triangle property, we obtain:
 $ A_{\epsilon/2, \, n} (\mu_j) \subset A_{\epsilon, \, n}(\mu),$
from where \begin{equation}  \label{eqnL18} \Leb \Big(A_{\epsilon/2, \, n} (\mu_j) \Big) \leq \Leb \Big(A_{\epsilon, \, n} (\mu) \Big).\end{equation}
Since $\mu_j \in {\mathcal P}_f$, we can apply equality (\ref{eqnL12}) to $\Leb \Big(A_{\epsilon/2, \, n} (\mu_j)\Big)$, which joint with inequality (\ref{eqnL18}) implies:
$$\limsup_{n \rightarrow + \infty} \frac{1}{n}\log \Leb \Big(A_{\epsilon, \, n} (\mu) \Big) \geq 0. $$
Finally, since $\Leb$ is a probability measure, we deduce that the above limsup equals 0, concluding that $\mu \in {\mathcal P}_f$ as wanted.
\end{proof}

\vspace{.3cm}

\noindent \textsc { Theorem \ref{propositionWeaklyPseudoPhysicalMeasures} f) } \em $ \displaystyle \lim _{n \rightarrow + \infty} \mbox{dist}^*(\sigma_n(x), {\mathcal P}_f)= 0 $  for Lebesgue almost all $x \in M$. \em
\begin{proof}
  Theorem 1.5 of \cite{CE}, states that the distance between $\sigma_n(x)$ and the set of pseudo-physical measures converges to zero with $n \rightarrow + \infty$ for Lebesgue almost all $x \in M$. Since the pseudo-physical measures are contained in ${\mathcal P}_f$, we trivially deduce the wanted equality. \end{proof}

\noindent \textsc { Theorem \ref{propositionWeaklyPseudoPhysicalMeasures} g) } \em  If the weak pseudo-physical measure $\mu$ is unique, then it is physical and its basin of attraction $B(\mu)$ covers Lebesgue a.e. $x \in M$. \em
\begin{proof}
  It is an immediate consequence of part \textsc{f)}. \end{proof}

\section{Sufficient condition for Pesin's Entropy Formula} \label{sectionSufficientCondition}

In the sequel we assume that the map $f\in \mbox{Diff}^1(M)$ is Anosov.  The purpose of this section is to deduce, as an immediate consequence from previous known results,  part  \textsc{a})  of Theorem \ref{mainTheo}, and  the sufficient condition to satisfy Pesin's Entropy Formula   in part  \textsc{b})  of Theorem \ref{mainTheo}. Namely, we will deduce  that if all the ergodic components of an $f$-invariant measure $\mu$  are weak pseudo-physical, then $\mu$ satisfies Pesin's Entropy Formula.

Recall Definition \ref{definitionBasinOfAttraction}, which defines the $\epsilon$-pseudo basin $A_{\epsilon, \, n}(\mu)$ up to time $n$  of a probability measure $\mu  $. We will   apply  the following  result:

\begin{Thm}
 \textsc {\cite{CCE}} \label{TheoremCCE1} Let $M$ be a compact Riemannian manifold of finite dimension.
Let $f \in \mbox{Diff}^1(M)$ be Anosov with  hyperbolic splitting $TM= E \oplus F$, where $E$ and $F$ are the stable and unstable sub-bundles respectively. Then, the following inequality holds for any $f$-invariant $\mu \in {\mathcal M}$:
\begin{equation}
\label{eqnL20} \lim_{\epsilon \rightarrow 0^+} \limsup_{n \rightarrow + \infty} \frac{\log \Leb (A_{\epsilon, \, n}(\mu))}{n} \leq h_{\mu}(f) - \int \log|\det Df |_{F } \, d \mu.
\end{equation}
\end{Thm}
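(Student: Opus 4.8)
The plan is to prove the stated upper bound (\ref{eqnL20}) for an arbitrary $f$-invariant $\mu$, reading it as a Ruelle-type inequality measured against Lebesgue rather than against $\mu$ itself. First I would record that, by Oseledets' theorem together with Remark~\ref{RemarkAnosov}, the positive Lyapunov exponents of $\mu$ are exactly those along the unstable bundle $F$, so that $\int\log|\det Df|_{F}|\,d\mu=\int\sum_i\chi_i^+\,d\mu$ and the right-hand side of (\ref{eqnL20}) is the Margulis--Ruelle gap $h_{\mu}(f)-\int\sum_i\chi_i^+\,d\mu$. Since $\Leb$ is a probability measure the left-hand side is automatically $\le 0$; the real content is the sharper exponential rate, which I would obtain by combining a volumetric estimate along pseudo-unstable discs with an entropy-type count of the orbit segments whose empirical measure is close to $\mu$.

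For the volumetric estimate I would invoke Ma\~{n}\'{e}'s construction of a local $C^1$ pseudo-unstable foliation: cover $M$ by $C^1$ discs that are Hadamard graphs tangent to a narrow cone around $F$ and whose forward iterates have bounded dispersion. Because the leaves are $C^1$, Lebesgue measure disintegrates as a genuine Fubini product of a conditional unstable volume on each disc $W$ against a transverse measure. On a fixed disc I would cover $A_{\epsilon,\,n}(\mu)\cap W$ by dynamical $(n,\delta)$-balls. The key observation is that for $x\in A_{\epsilon,\,n}(\mu)$ the empirical measure $\sigma_n(x)$ is weak$^*$-close to $\mu$, and since $\log|\det Df|_{F}|$ is continuous this forces the Birkhoff average $\frac1n\sum_{j=0}^{n-1}\log|\det Df|_{F}|(f^jx)$ to lie within $o_\epsilon(1)$ of $\int\log|\det Df|_{F}|\,d\mu$. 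Pushing a dynamical ball forward by $f^n$ and using the bounded dispersion to control the distortion of the unstable Jacobian across $W$, each such ball has conditional unstable volume at most $e^{-n(\int\log|\det Df|_{F}|\,d\mu-o_\epsilon(1))}$.

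It then remains to bound the number of dynamical balls meeting $A_{\epsilon,\,n}(\mu)$. Because $f$ is Anosov and hence expansive, entropy is computed by $(n,\delta)$-separated sets and the map $\nu\mapsto h_\nu$ is upper semicontinuous; together with the variational principle this gives that the number of balls needed grows at exponential rate at most $\sup\{h_\nu:\mbox{dist}^*(\nu,\mu)\le\epsilon\}$, which tends to $h_{\mu}(f)$ as $\epsilon\to0^+$. Multiplying the per-ball volume bound by this count and integrating over the transversal yields $\Leb(A_{\epsilon,\,n}(\mu))\le e^{\,n(h_{\mu}(f)-\int\log|\det Df|_{F}|\,d\mu+o_\epsilon(1))}$; taking $\frac1n\log$, then $\limsup_{n\to\infty}$, and finally $\epsilon\to0^+$ to absorb the $o_\epsilon(1)$ term gives (\ref{eqnL20}). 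Note that ergodicity is never used, so the argument applies to every invariant $\mu$, as required.

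The main obstacle is precisely the distortion control in the $C^1$ category: without H\"older continuity of $Df$ the distortion of $\det Df^n|_{TW}$ across a disc is not a priori bounded, so the clean per-ball volume estimate can break down. Ma\~{n}\'{e}'s bounded-dispersion Hadamard graphs are what tame this, supplying $C^1$ leaves with uniform geometry along forward iterates, so that the Fubini disintegration is legitimate and the distortion contributes only a subexponential factor $e^{no_\epsilon(1)}$. Establishing the existence of such admissible graphs, showing that admissibility is preserved under forward iteration, and verifying that every error term genuinely vanishes as $\epsilon,\delta\to0^+$ is the technical heart of the argument; a secondary subtlety is making the entropy count uniform over the positive-measure set of discs appearing in the Fubini decomposition.
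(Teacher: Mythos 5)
You should note first that the paper contains no proof of this statement: it is imported verbatim from \cite{CCE} (the ``proof'' is the single line ``See Proposition 2.1 in \cite{CCE}''), so the only meaningful comparison is with that reference and with the paper's proof of the reverse inequality. Your sketch is correct in strategy and is essentially the same Ma\~{n}\'{e}-style scheme used there: a Fubini disintegration of $\Leb$ along the $C^1$ pseudo-unstable leaves of Lemma \ref{lemmaFoliation} bounds the conditional volume of each dynamical cell by $e^{-n(\int \log|\det Df|_{F}|\,d\mu - o_{\epsilon}(1))}$ (the distortion worry you raise is settled not by H\"{o}lder continuity but by uniform continuity of $\log|\det Df|_{F}|$ over a Bowen ball together with part \textsc{c)} of Lemma \ref{lemmaFoliation}), while a Misiurewicz--Katok count of $(n,\delta)$-separated centers with empirical measures near $\mu$, combined with expansiveness and upper semicontinuity of $\nu \mapsto h_{\nu}(f)$, bounds the number of cells by $e^{n(h_{\mu}(f)+o_{\epsilon}(1))}$. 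This is exactly the mirror image of the paper's own proof of Theorem \ref{Theorem2}, which runs the same two estimates in the opposite direction using the atoms of an iterated Markov partition and Lemma \ref{lemmaEntropia} in place of dynamical balls and the separated-set count.
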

{\textsc{Proof}. }
See Proposition 2.1 in \cite{CCE}.

\begin{Rem} \em  \label{RemarkRuelle}
 \em For the non negative Lyapunov exponents, we adopt the notation   $\chi^+_i(x)$ as in Definition \ref{definitionPesinFormula}. For any $f \in \mbox{Diff}^1(M)$, Margulis and Ruelle  inequality \cite{Margulis, Ruelle} states:
  \begin{equation} \label{eqnRuelleInequality}h_{\mu}(f) \leq \int \sum_{i=1}^{   \mbox{\footnotesize dim}(M)} \chi^+_i(x)\, d \mu.\end{equation}
  Thus, Pesin's Entropy Formula holds for an invariant  measure $\mu$, if and only if the following inequality holds:
  $$h_{\mu}(f) \geq \int \sum_{i=1}^{   \mbox{\footnotesize dim}(M)} \chi^+_i(x)\, d \mu.$$
  Besides, from Definition \ref{DefinitionAnosov}, and from the formula of the integral of the volume form along the unstable sub-bundle $F$, we obtain the following equality for Anosov diffeomorphisms:
   $$ \int \sum_{i=1}^{   \mbox{\footnotesize dim}(M)} \chi^+_i(x)\, d \mu =  \int \log|\det Df|_F| \, d \mu.$$
   Joining the above assertions, we conclude:

   \em Let $f \in \mbox{\em Diff}^1(M)$  be Anosov, and $F$ be its unstable sub-bundle. Then, any   $f$-invariant probability measure $\mu$  satisfies Pesin's Entropy Formula if and only if
   \begin{equation}
   \label{eqnL21}
   h_{\mu}(f) \geq \int \log |\det Df|_F| \, d \mu.\end{equation}
\end{Rem}

\vspace{.3cm}

We are ready to deduce part  \textsc{a})  of Theorem \ref{mainTheo}, which is indeed a corollary of Theorem \ref{TheoremCCE1}:

\vspace{.3cm}

\noindent \textsc { Part a) of Theorem \ref{mainTheo}: } \em If $f \in \mbox{\em Diff}^1(M)$ is Anosov and if $\mu$ is a weak pseudo-physical   $f$-invariant measure, then $\mu$ satisfies Pesin's Entropy Formula. Therefore, the set of invariant probability measures that satisfy Pesin's Entropy Formula  is nonempty. \em

\begin{proof}

 By contradiction, assume that $\mu$ does not satisfy Pesin's Entropy Formula. According to Remark \ref{RemarkRuelle}, inequality (\ref{eqnL21}) does not hold:
 $$h_{\mu}(f) - \int \log |\det Df|_F| \, d \mu <0.$$
 Therefore, applying inequality (\ref{eqnL20}) of Theorem \ref{TheoremCCE1}, we conclude that there exists $\epsilon >0$ such that
 $$\limsup_{n \rightarrow + \infty} \frac{\log \Leb (A_{\epsilon, \, n}(\mu))}{n}  <   0. $$
  So, equality (\ref{eqnL12}) does not hold; hence $\mu$ is not weak pseudo-physical, contradicting the hypothesis. We have proved that all the weak pseudo-physical measures for $f$ satisfy Pesin's Entropy Formula. From part  \textsc{c}) of Theorem \ref{propositionWeaklyPseudoPhysicalMeasures},  weak pseudo-physical measures do exist. So, the set of measures that satisfy   Pesin's Entropy Formula is nonempty.
 \end{proof}

 \vspace{.3cm}

 We now recall the following well known result (see for instance Theorems 4.3.7 and 4.5.6 of \cite{Keller}):
 \begin{Thm} \label{TheoremConvexHull}
Let $f \in \mbox{\em Diff}^1(M)$ be Anosov. An $f$-invariant measure $\mu$ satisfies Pesin's Entropy Formula if and only if its ergodic components $\mu_x$ satisfy it for $\mu$-a.e. $x \in M$.
\end{Thm}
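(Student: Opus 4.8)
The plan is to reduce both implications to a single affinity statement together with the one-sided Margulis--Ruelle bound. By Remark \ref{RemarkRuelle} it is natural to work with the functional $\phi(\nu) := h_\nu(f) - \int \log|\det Df|_F|\, d\nu$, defined for every $f$-invariant probability measure $\nu$. The Margulis--Ruelle inequality (\ref{eqnRuelleInequality}), combined with the Anosov identity $\int \sum_i \chi_i^+\, d\nu = \int \log|\det Df|_F|\, d\nu$ from that remark, gives $\phi(\nu)\le 0$ for every such $\nu$, and $\nu$ satisfies Pesin's Entropy Formula if and only if $\phi(\nu)=0$. Thus the theorem will follow once I show that $\phi$ is affine with respect to the ergodic decomposition $\mu = \int \mu_x\, d\mu(x)$, that is, $\phi(\mu) = \int \phi(\mu_x)\, d\mu(x)$.

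Next I would establish this affinity by treating the two terms of $\phi$ separately. The integral term is immediate: since $F$ is a continuous subbundle and $f \in \mbox{Diff}^1(M)$, the map $x \mapsto \log|\det Df_x|_{F(x)}|$ is continuous on the compact manifold $M$, hence bounded and $\mu$-integrable, so the defining property of the ergodic decomposition yields $\int \log|\det Df|_F|\, d\mu = \int \big( \int \log|\det Df|_F|\, d\mu_x \big)\, d\mu(x)$ directly. The entropy term is the classical affinity of the Kolmogorov--Sinai entropy (Jacobs' theorem): the function $x \mapsto h_{\mu_x}(f)$ is measurable and $h_\mu(f) = \int h_{\mu_x}(f)\, d\mu(x)$. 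Subtracting the two identities gives $\phi(\mu) = \int \phi(\mu_x)\, d\mu(x)$.

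Finally I would close the argument using the sign of $\phi$. Applying the bound $\phi(\nu)\le 0$ to each ergodic component shows that $\phi(\mu_x)\le 0$ for $\mu$-a.e.\ $x$. Hence the nonpositive integrand $\phi(\mu_x)$ integrates to $\phi(\mu)$, and a nonpositive integrable function has vanishing integral if and only if it vanishes almost everywhere. Therefore $\phi(\mu)=0$ if and only if $\phi(\mu_x)=0$ for $\mu$-a.e.\ $x$; translating back through Remark \ref{RemarkRuelle}, this says precisely that $\mu$ satisfies Pesin's Entropy Formula if and only if $\mu_x$ does so for $\mu$-a.e.\ $x$.

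I expect the only genuinely nontrivial ingredient to be the affinity of the metric entropy over the ergodic decomposition, together with the measurability of $x \mapsto h_{\mu_x}(f)$; everything else is either a componentwise consequence of Ruelle's inequality or the routine observation that a fixed continuous observable decomposes trivially. Since this is exactly the content imported from Theorems 4.3.7 and 4.5.6 of \cite{Keller}, in the write-up I would cite those results for the entropy affinity rather than reprove them, and devote the remaining effort to the sign argument and the reformulation via $\phi$.
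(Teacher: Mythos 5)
Your proposal is correct and follows essentially the same route as the paper: affinity of the metric entropy over the ergodic decomposition (imported from Keller, using expansivity of Anosov maps), the trivial decomposition of the continuous observable $\log|\det Df|_F|$, and the Margulis--Ruelle sign argument showing a nonpositive integrand has zero integral iff it vanishes a.e. Your packaging via the functional $\phi$ merely makes explicit the final step that the paper states more tersely; no substantive difference.
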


\begin{proof} On the one hand, we recall that any Anosov $C^1$ diffeomorphism $f$ is expansive, and for any expansive homeomorphism $f$ on $M $  the metric entropy $h_{\mu}(f)$ depends upper semi-continuously on the $f$-invariant measure $\mu$ (see for instance Theorem 4.5.6 of \cite{Keller}).
 So, we can apply the theorem of the Affinity of the Entropy Function (see Theorem 4.3.7 of \cite{Keller}), which states that
 \begin{equation} \label{eqnL22} h_{\mu}(f) = \int h_{\mu_x}(f) \, d\mu(x),\end{equation}
 where the measures $ \mu_x $ for $\mu-\mbox{a.e. } x \in M $  are the ergodic components of $\mu$.

  On the other hand, the ergodic decomposition theorem states that
  \begin{equation} \label{eqnL23}\int  \log |\det Df|_F  |\, d \mu  = \int d \mu(x) \Big( \int|\det Df_y|_{F(y)}  | \, d \mu_x(y) \Big). \end{equation}

  Joining Equalities (\ref{eqnL22}) and (\ref{eqnL23}), and taking into account Margulis and Ruelle inequality (\ref{eqnRuelleInequality}), we deduce that
  $$h_{\mu}(f)  - \int  \log |\det Df|_F  |\, d \mu = 0 $$ if and only if
  $$h_{\mu_x}(f)  - \int  \log |\det Df(y)|_{F(y)}  |\, d \mu_x(y) = 0 \mbox{ for } \mu-\mbox{a.e. } x \in M,$$ ending the proof of Theorem \ref{TheoremConvexHull}.
\end{proof}

 As a consequence we obtain:

 \vspace{.3cm}

 \noindent \textsc { Part b) of Theorem \ref{mainTheo}, sufficient condition: } \em If $f \in \mbox{\em Diff}^1(M)$ is Anosov and if $\mu$ is an invariant measure whose ergodic components $\mu_x$ are weak pseudo-physical for $\mu$-a.e. $x \in M$, then $\mu$ satisfies Pesin's Entropy Formula.   \em

\begin{proof}
From the hypothesis, and applying part  \textsc{a}) of Theorem \ref{mainTheo}, we deduce that the ergodic components $\mu_x$ of $\mu$ satisfy Pesin's Entropy Formula for $\mu$-a.e. $x \in M$. So, from Theorem \ref{TheoremConvexHull}, the measure $\mu$  also satisfies this formula.
\end{proof}

\section{Necessary condition for Pesin's Entropy Formula} \label{sectionNecessaryCondition}

In this section we will prove the necessary condition   to satisfy Pesin's Entropy Formula, as stated in part  \textsc{b}) of Theorem \ref{mainTheo}. Precisely, we will prove  that if $f \in \mbox{Diff}^1(M)$ is Anosov, and if the $f$-invariant measure $\mu$ satisfies Pesin's Entropy Formula, then the ergodic components of   $\mu$  are weak pseudo-physical. We will also prove the equality of Theorem \ref{mainTheorem3} for any ergodic measure $\mu$.

\subsection{Previous known properties for Anosov diffeomorphisms}.

  \noindent \textsc { Expansivity.}
  Recall that any Anosov diffeomorphism $f$ is expansive (see for instance Lemma 3.4 in \cite{Bowen}). Namely, there exists a constant $\alpha >0$, which is called the expansivity constant, such that $$\mbox{dist}(f^n(x), f^n(y)) < \alpha \ \ \ \forall \, n \in \mathbb{Z} \ \ \ \Rightarrow \ \ \ x= y.$$

Given two partitions ${\mathcal Q}$ and ${\mathcal R}$,  the partition ${\mathcal Q} \vee {\mathcal R}$ is defined by
$${\mathcal Q} \vee {\mathcal R} = \Big\{Q \cap R \colon \ \ Q \in {\mathcal Q}, \ R \in {\mathcal R} \Big\}.$$

\noindent \textsc { Metric entropy for expansive systems.}
 Recall the following result, which follows from Kolmorgorov-Sinai Theorem  in the case of expansive homeomorphisms   (see for instance,  Proposition 2.5 of \cite{Bowen}, or also  Theorem 3.2.18 and Lemma 4.5.4 of \cite{Keller}):

 \em If ${\mathcal R}$ is a finite partition whose pieces are Borel measurable sets and have diameter smaller than the expansivity constant $\alpha $, then   $\bigcup_{n= 0}^{+ \infty}\{\bigvee_{k=-n }^{k= + n} f^{-j}{\mathcal R}\} $ generates the Borel $\sigma$-algebra, and for any $f$-invariant measure $\mu$, the metric entropy $h_{\mu}(f)$ can be computed by: \em
\begin{equation} \label{eqnMetricEntropy} h_{\mu}(f) = \limsup_{n \rightarrow + \infty} \frac{H({\mathcal R}_n, \mu)}{n}, \ \
\mbox{ where }  \   {\mathcal R}_n := \bigvee_{j= 0}^{n-1} f^{-j}({\mathcal R}), \ \ \mbox{ and } \end{equation} \begin{equation} \label{eqn39} H({\mathcal R}_n, \mu) := -\sum_{Y \in {\mathcal R}_n} \mu(Y) \log (\mu(Y)) \leq   \log \#\{Y \in {\mathcal R}_n \colon \mu(Y) >0\}. \end{equation}
  Note: In (\ref{eqn39}) at right,  $\#{\mathcal P}$ denotes the number of elements of the  finite set ${\mathcal P}$.

\vspace{.3cm}

\noindent  \textsc { Rectangles.}
Recall the definition of   rectangle $R$ in the manifold $M$ for the Anosov diffeomorphism $f$. (See \cite{Bowen}, page 78.) In particular a rectangle $R$ is proper if $\overline{R} =  R = \overline{\mbox{int}(R)}$.  For any proper rectangle $R$ and any $x \in R$   denote   $$W^s_R:= \mbox{connected component}(W^s(x) \cap R) \ni x, $$ $$ W^u_R:= \mbox{connected component}(W^u(x) \cap R) \ni x,$$ where $W^s(x), \ W^u(x)$ are the stable and unstable submanifolds of the point $x$.


\vspace{.3cm}

The    properties below follow from the definition of rectangle $R$:

\noindent  \textsc {(i) } For any pair of points $x, y \in R$ there exists a unique point, which we denote by $[x,y]$, defined by $$[x,y]:   \in W^s_R(x)  \pitchfork W^u_R(x).$$

\noindent  \textsc {(ii) } There exists a constant $K_R >0$ such that, if $L\subset R$ is a   local embedded $C^1$-submanifold,  with dimension equal to the unstable dimension, and such that $L$ intersects transversally  the local stable manifolds $W^s_{R}(x)$ for all $x \in R$, then   \begin{equation} \label{eqnL25} \mbox{Leb}^L(L) \geq K_R^{-1}, \end{equation} where $\mbox{Leb}^L$ denotes the Lebesgue measure along $L$.

 We recall the definition of Markov partition ${\mathcal R} = \{R_i\}_{1 \leq i \leq k}$ into rectangles $R_i$  (see \cite{Bowen}, pages 78--79) and the following well known result:
\begin{Thm}
\label{TheoremExistenceMarkovPartition}
 \textsc { (Existence of Markov Partitions).}

 Let $f \in \mbox{\em Diff}^1(M)$ be Anosov. Then, for all $\delta  >0$ there exists a Markov partition whose rectangles have diameter  smaller than $\delta$.
\end{Thm}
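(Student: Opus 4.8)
The plan is to carry out the classical Sinai--Bowen construction of Markov partitions, observing that it is purely topological: it relies only on expansivity, on the pseudo-orbit tracing (shadowing) property of Anosov diffeomorphisms, and on the local product structure encoded by the bracket $[\cdot,\cdot]$ and by properties \textsc{(i)}--\textsc{(ii)} above. None of these invokes the absolute continuity of the holonomies along the invariant foliations --- the property that fails in the $C^1$ category --- so the $C^2$ argument applies verbatim for $C^1$ Anosov $f$.

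First I would fix the expansivity constant $\alpha$ and choose $\beta \in (0,\delta)$ so small that every $\beta$-pseudo-orbit $\{q_n\}_{n \in \mathbb{Z}}$ of $f$ is $\alpha$-shadowed by a genuine orbit, and, by expansivity, that this shadowing orbit is \emph{unique}; shrinking $\beta$ further I also arrange that the local product neighbourhoods used below have diameter well below $\delta$. I then pick a finite subset $P \subset M$ that is $\beta'$-dense, with $\beta'$ small enough relative to $\beta$ and to the Lipschitz constant of $f$ that the nearest-point sequence of every true orbit is a $\beta$-pseudo-orbit supported on $P$. Form the subshift of finite type $\Sigma \subset P^{\mathbb{Z}}$ consisting of the sequences $q=(q_n)$ with $\mbox{dist}(f(q_n), q_{n+1}) \le \beta$ for all $n$. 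Sending each $q \in \Sigma$ to the unique point $\theta(q)$ whose orbit $\alpha$-shadows $q$ defines a continuous surjection $\theta : \Sigma \to M$ satisfying $\theta \circ S = f \circ \theta$, where $S$ is the shift; surjectivity holds because $P$ is $\beta'$-dense, so every genuine orbit is itself realized by a sequence in $\Sigma$.

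Next I would push the product structure of $\Sigma$ down to $M$. For each $s \in P$ set $T_s := \theta\{q \in \Sigma : q_0 = s\}$. Since a cylinder in $\Sigma$ splits as a product over the negative and the non-negative coordinates, and $\theta$ respects this splitting through $[\cdot,\cdot]$, one checks that each $T_s$ is a rectangle of diameter smaller than $\delta$, closed under the bracket, and that $\{T_s\}_{s \in P}$ covers $M$; replacing $T_s$ by $\overline{\mathrm{int}(T_s)}$ makes it proper. The $T_s$ may overlap, so the final step is Bowen's refinement: for each pair $s,t$ with $T_s \cap T_t \neq \emptyset$ I subdivide $T_s$ according to the relative position of a point $x$, namely according to whether $W^s_{T_s}(x)$ and $W^u_{T_s}(x)$ meet $T_t$. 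The common refinement over all such pairs yields a finite family $\mathcal{R} = \{R_i\}$ of proper subrectangles with pairwise disjoint interiors and diameters still smaller than $\delta$.

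It then remains to verify the Markov property: if $x \in \mathrm{int}(R_i)$ and $f(x) \in \mathrm{int}(R_j)$, then $f(W^s_{R_i}(x)) \subset W^s_{R_j}(f(x))$ and $f^{-1}(W^u_{R_j}(f(x))) \subset W^u_{R_i}(x)$. Here the intertwining $\theta \circ S = f \circ \theta$ is decisive: it forces $f(T_s) \cap T_t$ to be a subrectangle that is full in the unstable direction of $T_t$ and full in the stable direction of $f(T_s)$, and the refinement is designed precisely so as not to destroy this. I expect the main obstacle to be exactly this bookkeeping --- proving that the refined pieces $R_i$ are proper rectangles, that their topological boundaries are carried into boundaries, and that the Markov condition survives the refinement. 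These are the delicate combinatorial parts of the Sinai--Bowen argument; by contrast the dynamical input (shadowing, expansivity, local product structure) is comparatively soft and, crucially, available for $C^1$ Anosov diffeomorphisms. Since this is a standard result, in the write-up I would either reproduce this outline or simply refer to \cite{Bowen}, noting that the construction uses no regularity beyond $C^1$.
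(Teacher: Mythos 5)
Your outline is a faithful sketch of the classical Sinai--Bowen construction via shadowing and expansivity, and your key observation --- that this construction is purely topological and uses no regularity beyond $C^1$ --- is exactly why it applies here; the paper itself proves nothing more, simply citing Theorem 3.12 of \cite{Bowen}. So your proposal is correct and takes essentially the same route as the paper, only spelled out in more detail.
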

{\textsc{Proof}. }   See  Theorem 3.12 of \cite{Bowen}.

 \begin{Def} \em
 \label{definitionDynamicalRectangle} \textsc { (Dynamical rectangle)} \em

 Let ${\mathcal R} $ be a Markov partition of the manifold $M$, let $x \in M$ and denote by $R(x)$ the rectangle of ${\mathcal R}$ that contains $x$. Let $n  $ be a positive natural number.  The \em dynamical rectangle \em $R_n(x)$ that contains $x$ is defined by
 $$R_n(x) := \bigcap_{j= 0}^{n-1} f^{-j}(R(f^j(x))).$$

 The following property follows from the definition of Markov Partition (see Condition (b) in \cite{Bowen}, page 79):
 \begin{equation} \label{eqnDynamicalRectangle}
 \forall \  n \geq 0, \ \ \mbox{ if } y \in R_n(x)  \ \  \mbox{ then } \ \  W^s_{R}(y) \subset R_n(x).\end{equation}
\end{Def}

\subsection{Technical Lemmas}

\begin{Lem}
\label{lemmaEntropia} Let $f$ be an Anosov diffeomorphism on a compact manifold. Then,  for any finite Markov partition ${\mathcal R}$, there exists a constant $K_0 >0$ satisfying the following inequality for any $f$-invariant probability measure $\mu$, for any $0 < \epsilon < 1/4$, for   any Borel measurable set $A \subset M$ such that $\mu(A) > 1 - \epsilon$, and for any natural number $n \geq 1$:
\begin{equation} \label{eqnL40} \log \#\{Y \in {\mathcal R}_n  \colon Y \cap A \neq \emptyset\} \geq H({\mathcal R}_n , \mu) - n \cdot K_0 \cdot \epsilon  +  \epsilon\log \epsilon + (1- \epsilon) \log (1- \epsilon).\end{equation}

\end{Lem}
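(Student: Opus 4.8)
The plan is to bound $N := \#\{Y \in {\mathcal R}_n : Y \cap A \ne \emptyset\}$ from below by splitting the entropy $H({\mathcal R}_n, \mu)$ according to whether an atom of ${\mathcal R}_n$ meets $A$ or not, and then absorbing the contribution of the atoms missing $A$ into the linear-in-$n$ error term, using that such atoms carry little mass. First I would let ${\mathcal G}$ be the collection of atoms $Y \in {\mathcal R}_n$ with $Y \cap A \ne \emptyset$ and ${\mathcal B}$ the collection of atoms disjoint from $A$, so that $N = \#{\mathcal G}$. Since each $Y \in {\mathcal B}$ is contained in $M \setminus A$, the union of the bad atoms has mass $q := \mu\big(\bigcup_{Y \in {\mathcal B}} Y\big) \le \mu(M \setminus A) < \epsilon$, while $p := 1 - q > 1 - \epsilon$.

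Next I would invoke the grouping (conditional entropy) property of $H(\cdot, \mu)$ with respect to the two-set coarsening $\eta = \{\bigcup {\mathcal G}, \bigcup {\mathcal B}\}$, which ${\mathcal R}_n$ refines:
$$H({\mathcal R}_n, \mu) = H(\eta, \mu) + p\, H_{\mu_G}({\mathcal R}_n) + q\, H_{\mu_B}({\mathcal R}_n),$$
where $\mu_G$ and $\mu_B$ denote the normalized restrictions of $\mu$ to the two members of $\eta$. Here $H(\eta, \mu) = -p\log p - q\log q =: h(q)$ is the binary entropy, and each conditional term is bounded by the log of the number of atoms it sees, namely $H_{\mu_G}({\mathcal R}_n) \le \log \#{\mathcal G}$ and $H_{\mu_B}({\mathcal R}_n) \le \log \#{\mathcal B}$, because a probability vector of length $m$ has entropy at most $\log m$. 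Thus $H({\mathcal R}_n, \mu) \le h(q) + p\log\#{\mathcal G} + q\log\#{\mathcal B}$, and since $\#{\mathcal G} \ge 1$ and $p \le 1$ I may replace $p\log\#{\mathcal G}$ by $\log\#{\mathcal G}$ to obtain $\log\#{\mathcal G} \ge H({\mathcal R}_n, \mu) - h(q) - q\log\#{\mathcal B}$.

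It then remains to bring the two error terms into the required shape. Because the binary entropy $h(t) = -t\log t - (1-t)\log(1-t)$ is increasing on $[0, 1/2]$ and $0 \le q < \epsilon < 1/4$, I have $-h(q) \ge -h(\epsilon) = \epsilon\log\epsilon + (1-\epsilon)\log(1-\epsilon)$. For the remaining term I use only the crude count $\#{\mathcal B} \le \#{\mathcal R}_n \le (\#{\mathcal R})^n$, valid because ${\mathcal R}_n$ is the common refinement of the $n$ partitions $f^{-j}{\mathcal R}$, each with $\#{\mathcal R}$ atoms; hence $\log\#{\mathcal B} \le n\log\#{\mathcal R}$, and together with $0 \le q < \epsilon$ this gives $q\log\#{\mathcal B} \le n\,\epsilon\,\log\#{\mathcal R}$. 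Setting $K_0 := \log\#{\mathcal R}$, which depends only on the partition ${\mathcal R}$, and assembling the three estimates yields exactly inequality (\ref{eqnL40}).

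I do not expect a serious obstacle: the proof is a routine entropy computation and, in fact, uses only finiteness of ${\mathcal R}$ (through $\#{\mathcal R}_n \le (\#{\mathcal R})^n$), not the Markov property. The only delicate point is the sign bookkeeping — correctly applying the grouping formula, noting that the step from $p\log\#{\mathcal G}$ to $\log\#{\mathcal G}$ relies on $\log\#{\mathcal G} \ge 0$, and verifying the monotonicity direction of $h$ so that replacing $q$ by $\epsilon$ loosens the bound in the right way.
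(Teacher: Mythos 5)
Your proof is correct and follows essentially the same route as the paper's: both condition the entropy $H({\mathcal R}_n,\mu)$ on the two-set partition separating the atoms that meet $A$ from those that do not, bound each conditional entropy by the logarithm of the number of atoms involved, and obtain the $\epsilon\log\epsilon+(1-\epsilon)\log(1-\epsilon)$ term from the monotonicity of the binary entropy (equivalently of $-u\log u$) on the relevant range. The only divergence is in the constant $K_0$: you take the trivial bound $\#{\mathcal R}_n\le(\#{\mathcal R})^n$, which is valid for the paper's stated definition of the join, whereas the paper sets $K_0=\sup_{n\ge 1}\,n^{-1}\log\#{\mathcal R}_n$ and proves its finiteness by a geometric count of connected components of $f^{-n}(R_i)\cap R_j$; your shortcut is legitimate and in fact shows the lemma uses only the finiteness of ${\mathcal R}$, not the Markov or Anosov structure.
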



\begin{proof}   Denote    $A_n := \bigcup \Big \{ Y \in {\mathcal R}_n \colon A \cap Y  \neq \emptyset \Big\}.$
Since $A \subset A_n$ we have $\mu(A_n) > 1 - \epsilon$. If $\mu(A_n) = 1$ then inequality (\ref{eqnL40}) holds  trivially as a consequence of   (\ref{eqn39}). So, let us  prove Lemma \ref{lemmaEntropia} in the case $$1 - \epsilon <\mu(A_n) < 1; \mbox{ hence } 0 < \mu(M \setminus A_n) < \epsilon.$$
By definition: $$H({\mathcal R}_n, \mu) := - \sum_{Y \in {\mathcal R}_n} \mu(Y) \log \mu(Y)  $$ $$= -\sum_{Y \subset A_n}  \mu(Y) \log \mu(Y) - \sum_{Y \subset M \setminus A_n} \mu(Y) \log \mu(Y)$$ $$ = -\mu(A_n) \sum_{Y \subset A_n} \frac{\mu(Y)}{\mu(A_n)} \log \Big(\frac{\mu(Y)}{\mu(A_n)}     \Big) - \sum_{Y \subset A_n} \mu(Y) \log \mu(A_n)   $$
$$  -\mu(M \setminus A_n) \sum_{Y \subset M \setminus A_n} \frac{\mu(Y)}{\mu(M \setminus A_n)} \log \Big(\frac{\mu(Y)}{\mu(M \setminus A_n)}     \Big) - \sum_{Y \subset M \setminus A_n} \mu(Y) \log \mu(M \setminus A_n).$$

Construct the probability measures $\mu_1$ and $\mu_2$ defined by the following equalities for all Borelian set $B \subset M$: $$\mu_1(B) := \mu (B \cap A_n)/ \mu(B), \ \ \ \mu_2(B) := \mu(B \cap(M \setminus A_n))/\mu(M \setminus A_n).$$
We obtain
$$H({\mathcal R}_n, \mu)  =   -\mu(A_n) \sum_{Y \subset A_n}  {\mu_1(Y)} \log \Big( {\mu_1(Y)}     \Big) - \mu( A_n)   \log \mu(A_n)     $$ $$ -\mu(M \setminus A_n) \sum_{Y \subset M \setminus A_n}  {\mu_2(Y)}  \log \Big( {\mu_2(Y)}     \Big) -   \mu(M \setminus A_n) \log \mu(M \setminus A_n).$$
Applying inequality (\ref{eqn39}):
$$H({\mathcal R}_n, \mu) \leq     \log \#\big \{Y \in {\mathcal R}_n \colon  {Y \subset A_n}\big \} - \mu( A_n)   \log \mu(A_n)     $$ $$ + \mu(M \setminus A_n) \cdot \log \#  {\mathcal R}_n  -   \mu(M \setminus A_n) \log \mu(M \setminus A_n).$$
Taking into account that $0 <\mu(M \setminus A_n) < \epsilon < 1/4$ and that $-u\log u$ is strictly increasing for $0 < u < 1/4$ and strictly decreasing for $u > 3/4$, we obtain:
$$H({\mathcal R}_n, \mu) \leq  $$ $$  \log \#\big \{Y \in {\mathcal R}_n \colon  {Y \subset A_n}\big \} +  \mu(M \setminus A_n) \cdot \log \#  {\mathcal R}_n       -  \epsilon \log \epsilon -  (1- \epsilon) \log (1 - \epsilon) \leq $$ $$  \log \#\big \{Y \in {\mathcal R}_n \colon  {Y \subset A_n}\big \} +  \epsilon \cdot n \cdot  K_0 \cdot      -  \epsilon \log \epsilon -  (1- \epsilon) \log (1 - \epsilon),$$
where $$K_0 := \sup_{n \geq 1} \frac {\log \#  {\mathcal R}_n}{n} >0 $$
Therefore, to end the proof of Lemma \ref{lemmaEntropia} it is enough to show that $K_0 < + \infty$. In fact, for a Markov partition ${\mathcal R}$, any rectangle $Y \in {\mathcal R}_n$ is obtained  as a {connected component} of the intersection $ f^{-n}(R_i) \cap R_j $  for some pair of rectangles  $R_i, R_j \in {\mathcal R}$. Fixing $R_i \in {\mathcal R}$, the maximum number of connected components of the intersections of $f^{-n}(R_i)$ with the   rectangle  $R_j$ of the partition, is upper bounded the following quotient
$$   \frac{\max \Big \{\Leb(W^s_{f^{-n}(R_i)}(y)) \colon y \in {f^{-n}( {R_i})}\Big\}}{\min  \Big\{\Leb(W^s_{R_j}(x)) \colon x \in  {R_j} \Big\}}  \leq  $$ $$ d^n \frac{\max  \Big\{\Leb(W^s_{R_i}(x)) \colon x \in {R_i}\Big\}}{\min  \Big\{\Leb(W^s_{R_j}(x)) \colon x \in  {R_j}\Big\}} =: d^n \cdot q_{i,j},$$
where $$d := \max\Big\{ \big|\mbox{det} Df^{-1}_x|_{E(x)}\big| \colon x \in M \Big\}.$$     Thus, denoting   $k:= \#\mathcal R$, we have
$$ \#{\mathcal R}_n \leq k^2 \cdot d^n \cdot \max \Big \{  q_{i,j}: \ 1 \leq i,j \leq k \Big \}. $$
 We conclude that
 $$\limsup_{n \rightarrow + \infty} \frac {\log \#{\mathcal R}_n}{n} \leq   d   < + \infty,$$
which implies $ K_0:=  \sup _{n \geq 1} \displaystyle \frac {\log \#{\mathcal R}_n}{n}  < + \infty,$
ending the proof of Lemma \ref{lemmaEntropia}.
\end{proof}

 In the following Lemma    we will construct a local $C^1$-foliation, whose leaves are \em pseudo-unstable manifolds \em    $\epsilon$- approaching (in the $C^1$-topology)  the true local unstable manifolds of any rectangle of a  given Markov partition.

\begin{Lem} \label{lemmaFoliation} Let $f \in \mbox{\em Diff}^1(M)$ be Anosov. Denote  the stable and unstable subbundles by $E$ and $ F$, respectively. Denote  the expansivity constant by $\alpha >0$. Then,
for all $\epsilon >0$ there exist $0 <\delta_0  < \alpha$ and  $  K >0$ such that, for  any finite Markov partition ${\mathcal R} = \{R_i\}_{1 \leq i \leq k}$ into rectangles with diameter smaller than $\delta_0$, there exists a finite family    $\{{\mathcal L}_i\}_{1 \leq i \leq k}$ of local foliations ${\mathcal L}_i$, each one defined in an open neighborhood of each rectangle $R_i$,   satisfying the following properties  for all $1 \leq i \leq k$, for all $x \in R_i$ and for all $n \geq 0$:

\noindent \textsc {a) } ${\mathcal L}_i$ is $C^1$-trivializable and its leaves are $\mbox{dim}(F)$- dimensional.

\noindent \textsc {b) } \em $\mbox{dist}\big(F_{f^n(x)}, T_{f^n(x)}f^n({\mathcal L}_i(x)) \big) < \epsilon$, \em  where ${\mathcal L}_i(x)$ denotes the leaf of the foliation ${\mathcal L}_i$ that contains $x$.

\noindent \textsc {c) }
 $ \displaystyle K^{-1} e^{-n \epsilon} \leq \frac{\Big|\det dDf^n_x|_{\displaystyle T_x({\mathcal L_i}(x))}\Big| }{\Big|\det Df^n_x|_{\displaystyle F(x)}\Big|} \leq K \, e^{n \epsilon}. $

\noindent \textsc {d) }     There exist a point $x_i \in R_i$ and an open subset $A_i^s \subset W^s_{R_i}(x_i)$, in the topology of the  stable submanifold $W^s(x_i)$, such that \em
$$\Leb^{ W^s(x_i)} \big(A_i^s  \big ) \geq K^{-1}, $$ \em where \em $\Leb^{ W^s(x_i)}$ \em denotes the Lebesgue measure along the  submanifold $W^s(x_i)$; and besides, if $y \in A_i^s$, then \em
$$Leb^{ f^{n}({\mathcal L_i}(y))} \Big(f^n\big({\mathcal L_i}(y)  \cap    R_n(x)\big) \Big) \geq K^{-1},$$ \em
where \em $\Leb^{ f^{n}({\mathcal L_i}(y))}$ \em denotes the Lebesgue measure along the   submanifold ${\mathcal L_i}(y)$.
\end{Lem}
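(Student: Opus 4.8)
The plan is to build each $\mathcal L_i$ as an affine (constant-direction) foliation read in a single chart around $R_i$, and to extract all four properties from uniform hyperbolicity together with the Markov structure. First I would fix $\epsilon>0$ and, using uniform continuity of the unstable subbundle $F$ and of the map $(z,V)\mapsto \log\big|\det Df_z|_V\big|$ on the compact Grassmann bundle of $\dim(F)$-planes, choose $\epsilon_1\in(0,\epsilon]$ so small that any $\dim(F)$-plane within $\epsilon_1$ of $F$ lies in the unstable cone field, and so that the modulus of continuity of $\log|\det Df|$ at scale $\epsilon_1$ is at most $\epsilon$. Then I would pick $0<\delta_0<\alpha$ with $\mbox{dist}(F(z),F(z'))<\epsilon_1$ whenever $\mbox{dist}(z,z')<\delta_0$, and $\delta_0$ below the injectivity radius. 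Given a Markov partition $\mathcal R=\{R_i\}$ with $\mbox{diam}(R_i)<\delta_0$, I would take a center $x_i\in R_i$ and let $\mathcal L_i$ be the foliation by the affine planes parallel to $F(x_i)$ in the normal chart at $x_i$. This foliation is $C^\infty$, so \textsc{a)} holds; and since $T_z\mathcal L_i(z)=F(x_i)$ is $\epsilon_1$-close to $F(z)$ near $R_i$, the case $n=0$ of \textsc{b)} is immediate.

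For \textsc{b)} and \textsc{c)} I would track $W_j:=Df^j_z(F(x_i))=T_{f^j(z)}f^j(\mathcal L_i(z))$. As $W_0$ lies in the unstable cone of $F(z)$, forward contraction of this cone under $Df$ gives a rate $\lambda'<1$ with $\mbox{dist}(W_j,F(f^j(z)))\le(\lambda')^j\epsilon_1\le\epsilon_1<\epsilon$ for all $j\ge0$, which is \textsc{b)}. For \textsc{c)} I would expand the log-ratio as a telescoping sum $\sum_{j=0}^{n-1}\big(\log\big|\det Df_{f^j(z)}|_{W_j}\big|-\log\big|\det Df_{f^j(z)}|_{F(f^j(z))}\big|\big)$ and bound each term, via \textsc{b)}, by the modulus of continuity at scale $\epsilon_1$, hence by $\epsilon$; summing yields total error at most $n\epsilon$, so the ratio in \textsc{c)} lies in $[e^{-n\epsilon},e^{n\epsilon}]$ and \textsc{c)} holds with $K=1$ (enlarged later to absorb the constant of \textsc{d)}).

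The core of the argument is \textsc{d)}. I would use that $R_n(x)$ is full in the stable direction (equation (\ref{eqnDynamicalRectangle})) and exponentially thin in the unstable direction, while the Markov property sends the unstable slices of $R_n(x)$ onto the full unstable slices of $R(f^n(x))$. A leaf $\mathcal L_i(y)$ with $y\in W^s_{R_i}(x_i)$ sits at one stable height and runs in the nearly-unstable direction, so it crosses the thin unstable extent of $R_n(x)$ completely, drifting stably by at most $\epsilon_1$ times that extent. Letting $A_i^s$ be the $\epsilon_1$-interior of $W^s_{R_i}(x_i)$, this drift keeps the crossing inside $R_n(x)$ for every $x\in R_i$ and every $n$, and the rectangle geometry bounds $\Leb^{W^s(x_i)}(A_i^s)$ below. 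Then $f^n\big(\mathcal L_i(y)\cap R_n(x)\big)$ is a $\dim(F)$-disc crossing $R(f^n(x))$ fully, and transversally to all local stable manifolds by \textsc{b)} applied at every leaf point; inequality (\ref{eqnL25}), i.e. property (ii) of rectangles, then gives the lower bound $K_{R(f^n(x))}^{-1}\ge K^{-1}$, and I would finally set $K:=\max_i K_{R_i}$, enlarged to absorb the constants from the previous steps.

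The hard part will be \textsc{d)}: securing a full, transverse crossing uniformly over all $x\in R_i$, all $n\ge0$, and a positive-measure family of leaves. This hinges on marrying the Markov property---which turns the exponentially thin unstable sliver of $R_n(x)$ into a full slice of $R(f^n(x))$---to the $\epsilon_1$-closeness from \textsc{b)}, which both supplies the transversality needed for (\ref{eqnL25}) and controls the stable drift so that the interior choice of $A_i^s$ prevents any crossing from leaking through the stable boundary. By contrast, \textsc{b)} and \textsc{c)} are the anticipated Ma\~n\'e-type cone-contraction and log-Jacobian distortion estimates, and should be routine once $\epsilon_1$ and $\delta_0$ are fixed as above.
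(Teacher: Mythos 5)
Your proposal is essentially correct, but it reaches parts \textsc{a)}--\textsc{c)} by a genuinely different route. The paper does not construct the foliations at all: it imports them wholesale from Proposition 3.6 of \cite{CCE} (the Ma\~{n}\'{e}--Hadamard graph-transform construction) and devotes its entire proof to showing that \emph{any} $C^1$ foliation satisfying \textsc{a)} and \textsc{b)} automatically satisfies \textsc{d)}. Your explicit affine-chart foliation, combined with invariant-cone contraction for \textsc{b)} and the telescoping log-Jacobian estimate for \textsc{c)}, is a legitimate and more self-contained substitute in the Anosov setting, where strict invariance of the unstable cone field makes the constant-direction foliation work; it buys independence from \cite{CCE} at the cost of redoing estimates the paper gets by citation. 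For \textsc{d)} your argument and the paper's are the same geometric fact in different clothing: your ``full crossing with controlled stable drift'' is exactly the paper's assertion (\ref{eqnL24}) that ${\mathcal L}_i(y)$ meets every local stable leaf $W^s_{R_i}(z)$, which the paper obtains softly from continuity of transversal intersections and then pushes forward using (\ref{eqnDynamicalRectangle}) and the Markov property before invoking (\ref{eqnL25}), precisely as you do.

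One point needs a patch. You fix $\epsilon_1$ (hence the deviation of the leaves from $F$) from $\epsilon$ alone, \emph{before} the Markov partition is given, and then take $A_i^s$ to be the ``$\epsilon_1$-interior'' of $W^s_{R_i}(x_i)$. If some rectangle is much thinner in the stable direction than $\epsilon_1\delta_0$, the stable drift of a leaf across the unstable extent of $R_i$ can exceed the stable thickness of that rectangle, the $\epsilon_1$-interior is empty, and no leaf achieves a full crossing. The remedy is to let the tolerance governing \textsc{d)} be chosen \emph{after} the partition is fixed (as the paper does with its $\epsilon'$, which depends on the rectangles), shrinking the foliation's deviation from $F$ accordingly; this only improves \textsc{b)} and \textsc{c)}. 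Note also that your final $K$ must absorb the partition-dependent lower bound on $\Leb^{W^s(x_i)}(A_i^s)$, exactly as the paper's choice $K=\max_i\{K_{R_i},K_i\}$ does.
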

\begin{proof}
Proposition 3.6 of \cite{CCE} states the existence of $\delta_0>0$ and   the  local $C^1$-foliation  ${\mathcal L}_i$ satisfying \textsc{(a), (b)} and \textsc{(c)}. So, it is enough to prove that if $\epsilon >0$ is small enough, then any local  $C^1$-foliation ${\mathcal L}_i$ defined in a neighborhood of the rectangle $R_i$ and satisfying (\textsc{a})  and \textsc{(b)}, also satisfies \textsc{(d)} for some constant $K >0$.

In fact, choose and fix any point $x_i $ in the interior of the rectangle $R_i$. From the definition of rectangle, for each $z \in R_i$ there exists a unique point in the transversal intersection $$W^s_{R_i}(z) \pitchfork W_{R_i}^u(x_i) \neq \emptyset.$$ By continuity of the transversal intersection between $C^1$-manifolds, there exists $\epsilon' >0$ such that the following assertion holds:

 If $\mbox{dist}(x_i, y) < \epsilon'$ and if ${\mathcal L}_i$ is any local   foliation whose leaves have dimension $\mbox{dim}(F)$, are $C^1$, and  are   $\epsilon'$-near the unstable local leaves of $R_i$ in the $C^1$-topology, then for each $z \in R_i$ the intersection
 $W^s_{R_i}(z) \pitchfork {\mathcal L}_i(y)$ is   transversal and contains a single point.

 In particular,  we obtain:  \begin{equation} \label{eqnL24} W^s_{R_i}(z) \pitchfork {\mathcal L}_i(y) \neq \emptyset  \end{equation} $$ \forall \ y \in W^s_{R_i}(x_i) \mbox{ such that }  \mbox{dist}(x_i, y) < \epsilon', \ \ \forall \   z \in  R_n(x)  \subset R_i, \ \ \forall \   x \in R_i.$$

 Define $$A^s_i := \Big \{y \in W^s_{R_i}(x_i) \colon \mbox{dist}(x_i, y) < \epsilon'\Big\}.$$
 By construction $A^s_i$ is an open subset of the local stable submanifold $W^s_{R_i}(x_i)$, in the topology of this submanifold.  Construct a   real number   $K_i > 0$ large enough so
 \begin{equation} \label{eqnL26}\Leb^{ W^s(x_i)} \big(A_i^s  \big ) \geq K_i^{-1}. \end{equation}

 From assertion (\ref{eqnL24}) we deduce
 $$f^n \Big({\mathcal L}_i(y) \cap R_n(x)\Big) \pitchfork W^s_{f^n(R_n(x))}(w) \neq \emptyset$$
 for all $y \in A^s_i$ and for all $w \in  f^n(R_n(x)).$

 From the definition of the dynamical rectangle $R_n(x)$ and from the properties of the Markov partition, there exists a  rectangle $R_j \ni f^n(x)$ of the partition such that   local stable manifold $W^s_{R_j}(w) \supset W^s_{f^n(R_n(x))}(w)$ for all $w \in R_j$.

 So, we deduce   $$f^n \Big({\mathcal L}_i(y) \cap R_n(x)\Big) \pitchfork W^s_{R_j}(w) \neq \emptyset$$
 for all $y \in A^s_i$ and for all $w \in  R_j.$
 In other words, the pseudo-unstable $\mbox{dim}(F)-$ submanifold $f^n \Big({\mathcal L}_i(y) \cap R_n(x)\Big)$ intersects transversally all the local stable submanifolds of the rectangle $R_j$ where it is contained.  Thus, applying inequality (\ref{eqnL25}) we have
 \begin{equation} \label{eqnL27} \mbox{Leb}^{f^n({\mathcal L}_i)}f^n \Big(\big({\mathcal L}_i(y) \cap R_n(x)\big)\Big) \geq \frac{1}{K_{R_j}}.\end{equation}

  Finally, define $K = \max_{1 \leq i \leq k}\{K_{R_i}, \ K_i\}$. From inequalities (\ref{eqnL26}) and (\ref{eqnL27}), we conclude assertion \textsc{d)}, as wanted.
\end{proof}

\subsection{End of the proofs of Theorems \ref{mainTheo} and \ref{mainTheorem3}}

For any probability measure $\mu$ recall     equality (\ref{eqnL16}), defining the measurable set $A_{\epsilon, \, n}(\mu)$ which we called the $\epsilon$-pseudo basin of $\mu$ up to time $n$.
We will end the proof   of Theorem \ref{mainTheo}, by applying the following key result which bounds from below the Lebesgue measure of the set $A_{\epsilon, \, n}(\mu)$ for any ergodic measure $\mu$:

 \begin{Thm}
 \label{Theorem2}
 Let $M$ be a compact Riemannian manifold of finite dimension.
Let $f \in \mbox{Diff}^1(M)$ be Anosov with  hyperbolic splitting $TM= E \oplus F$, where $E$ and $F$ are the stable and unstable sub-bundles respectively. Let $\mu$ be an ergodic measure. Then:
\begin{equation}
\label{eqnL20-b} \lim_{\epsilon \rightarrow 0^+} \limsup_{n \rightarrow + \infty} \frac{\log \Leb (A_{\epsilon, \, n}(\mu))}{n} \geq h_{\mu}(f) - \int \log|\det Df |_{F } \, d \mu.
\end{equation}
\end{Thm}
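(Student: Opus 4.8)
The plan is to establish the reverse of the inequality in Theorem \ref{TheoremCCE1} by exhibiting, for each fixed $\epsilon>0$ and all large $n$, an explicit subset of $A_{\epsilon,n}(\mu)$ whose Lebesgue measure is at least of order $e^{n(h_\mu(f)-\int\log|\det Df|_F|\,d\mu)}$. The building blocks are the dynamical rectangles $R_n(x)$ of a sufficiently fine Markov partition ${\mathcal R}$: there are roughly $e^{nh_\mu(f)}$ of them carrying $\mu$-mass, counted from below by Lemma \ref{lemmaEntropia}, and each has Lebesgue measure roughly $e^{-n\int\log|\det Df|_F|\,d\mu}$, estimated from below by Lemma \ref{lemmaFoliation}.

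First I would fix $\epsilon>0$ and choose, via Theorem \ref{TheoremExistenceMarkovPartition}, a Markov partition ${\mathcal R}$ of diameter so small that (i) any two points in a common dynamical rectangle $R_n(x)$ have $\mbox{dist}^*(\sigma_n(\cdot),\sigma_n(\cdot))<\epsilon/4$, and (ii) the pseudo-unstable foliations ${\mathcal L}_i$ of Lemma \ref{lemmaFoliation} exist with closeness parameter $\epsilon$. Since $\mu$ is ergodic, Birkhoff's theorem gives $\sigma_n(x)\to\mu$ for $\mu$-a.e. $x$, so $\mu(A_{\epsilon/4,n}(\mu))\to 1$ by Fatou; hence for a given $\epsilon_1>0$ and all large $n$ the set $A:=A_{\epsilon/4,n}(\mu)$ satisfies $\mu(A)>1-\epsilon_1$. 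The key observation is that if $x\in A$ then the whole rectangle $R_n(x)$ lies in $A_{\epsilon,n}(\mu)$: every $z\in R_n(x)$ shadows the orbit of $x$ inside the partition, so $\mbox{dist}^*(\sigma_n(z),\sigma_n(x))<\epsilon/4$ by (i), whence $\mbox{dist}^*(\sigma_n(z),\mu)<\epsilon$. Therefore $A_{\epsilon,n}(\mu)$ contains the union of all $Y\in{\mathcal R}_n$ meeting $A$, these having pairwise disjoint interiors and Lebesgue-null boundaries, and their number is bounded below by Lemma \ref{lemmaEntropia} applied with this $A$.

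The heart of the argument is the volume of a single good rectangle $Y=R_n(x)$ with $x\in A\cap R_i$. I would decompose $\Leb(Y)$ by a Fubini/coarea argument along the $C^1$-trivializable foliation ${\mathcal L}_i$ (Lemma \ref{lemmaFoliation}(a)), using the stable transversal $A_i^s$ of Lemma \ref{lemmaFoliation}(d): up to a bounded trivialization Jacobian, $\Leb(Y)$ dominates $\int_{A_i^s}\Leb^{{\mathcal L}_i(y)}({\mathcal L}_i(y)\cap R_n(x))\,d\Leb^{W^s}(y)$. To bound each leafwise factor I would push forward by $f^n$: Lemma \ref{lemmaFoliation}(d) gives $\Leb^{f^n({\mathcal L}_i(y))}(f^n({\mathcal L}_i(y)\cap R_n(x)))\ge K^{-1}$, while the change of variables and Lemma \ref{lemmaFoliation}(c) give the upper Jacobian bound $|\det Df^n_z|_{T{\mathcal L}_i}|\le Ke^{n\epsilon}\,|\det Df^n_z|_F|$. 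Since every such $z$ lies in $R_n(x)\subset A_{\epsilon,n}(\mu)$, the relation $\tfrac1n\log|\det Df^n_z|_F|=\int\log|\det Df|_F|\,d\sigma_n(z)$ together with the weak$^*$ continuity of $\rho\mapsto\int\log|\det Df|_F|\,d\rho$ keeps this within a modulus $\eta(\epsilon)\to0$ of $\int\log|\det Df|_F|\,d\mu$. Combining, each good rectangle satisfies $\Leb(R_n(x))\ge \tilde K^{-1}e^{-n\epsilon}e^{-n(\int\log|\det Df|_F|\,d\mu+\eta(\epsilon))}$.

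Putting the counting and volume estimates together, taking $\tfrac1n\log$ and then $\limsup_{n\to\infty}$, the counting term produces $h_\mu(f)$ through (\ref{eqnMetricEntropy}), leaving error terms $-K_0\epsilon_1-\epsilon-\eta(\epsilon)$. The delicate point, and the main obstacle I anticipate, is the bookkeeping of these parameters: the constant $K_0$ of Lemma \ref{lemmaEntropia} depends on the now-fixed partition and may be large, but $\epsilon_1$ is free, so letting $\epsilon_1\to0$ first annihilates $K_0\epsilon_1$, and then letting $\epsilon\to0^+$ annihilates $\epsilon+\eta(\epsilon)$, yielding exactly $h_\mu(f)-\int\log|\det Df|_F|\,d\mu$ as the lower bound for the double limit in (\ref{eqnL20-b}). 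I expect the Fubini decomposition along the pseudo-unstable foliation, with the transfer of the leafwise volume bound of Lemma \ref{lemmaFoliation}(d) back through the Jacobian control of (c), to be the technically heaviest step; the remainder is the careful ordering of the limits in $\epsilon_1$, $n$ and $\epsilon$.
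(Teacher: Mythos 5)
Your proposal is correct and follows essentially the same route as the paper's proof of Theorem \ref{Theorem2}: saturate a large-$\mu$-measure set of Birkhoff-typical points by dynamical rectangles of a fine Markov partition, count those rectangles from below via Lemma \ref{lemmaEntropia}, and bound each rectangle's volume from below by a Fubini decomposition along the pseudo-unstable foliation of Lemma \ref{lemmaFoliation} with the Jacobian comparison of part (c) and the leaf-volume bound of part (d). The only differences are cosmetic bookkeeping (your set $A$ depends on $n$ and you carry a separate parameter $\epsilon_1$, whereas the paper uses a fixed set $A_N$ and a single $\epsilon$), neither of which affects the argument.
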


  \begin{proof} We notice that the limit at left in equality (\ref{eqnL20-b}) does not depend on the choice of metric $\mbox{dist}^*$ that induces the weak$^*$ topology in the space ${\mathcal M}$ of Borel probability measures. In fact, to prove the latter assertion it is enough to argue as in the proof of part  (\textsc{d}) of Theorem \ref{propositionWeaklyPseudoPhysicalMeasures} in Section \ref{sectionPropertiesWeaklyPseudoPhysicalMeasures}.
   So, to prove Theorem \ref{Theorem2} we   choose and fix the following metric in ${\mathcal M}$:

   $$\mbox{dist}^*(\mu, \nu) := \sum_{i= 0}^{+ \infty} \frac{|\int \varphi_i \, d \mu - \int \varphi_i \, d \nu|}{2^i} \ \ \ \forall \ \mu, \nu \in {\mathcal M},$$
   where $\{\varphi\}_{i \in \mathbb{N}}$ is any fixed countable  family of real continuous functions $\varphi_i \in C^0(M, [0,1])$ that is dense in $C^0(M,[0,1])$. Note that, according to the metric $\mbox{dist}^*$, the balls are convex. In other words, if a finite number of probability measures belong to the ball with centre  $\mu$ and radius $\epsilon >0$, then any convex combination of those measures also belongs to it.

   For any $x \in M$ and for any natural number $n \geq 1$ denote: $$\psi(x):= \log \big|\det (Df_x|_{F(x)})\big|,$$ $$\psi_n(x):= \log\big|\det (Df^{n}_x|_{F(x)})\big| = \sum_{j= 0}^{n-1} \psi (f^j(x)) = n \cdot \int \psi \, d\sigma_n(x),$$
  where $\sigma_n(x)$ is the empiric probability constructed in Definition \ref{definitionEmpiricProba}.

    Fix any real value $\epsilon >0$. The real funcion $\psi: M \mapsto \mathbb{R}$ is   continuous because $f$ is of class $C^1$ and the sub-bundle $F$ is continuous. Thus, from the definition of the weak$^*$ topology in the space ${\mathcal M}$ of probability measures, we deduce that there exists $0 <\epsilon'< \epsilon  $ such that
    $$\nu, \mu \in {\mathcal M}, \ \ \mbox{dist}^*(\nu, \mu) < \epsilon' \ \ \Rightarrow \ \ \Big|\int \psi \, d \nu - \int \psi \, d \mu\Big| < \epsilon.$$
    In particular, for $\nu = \sigma_n(x)$ we deduce:
    \begin{equation} \label{eqnL30}\mbox{If } \mbox{dist}^*(\sigma_n(x), \mu) < \epsilon', \mbox{ then } \ \ \Big|\log\big|\det (Df^{n}_x|_{F(x)})\big| - n \cdot  \int \psi \, d \mu\Big| < n \cdot \epsilon.\end{equation}

    Since $\mu$ is an ergodic probability measure, we have $\lim_{n \rightarrow + \infty} \sigma_n(x) = \mu$ for $\mu$-a.e. $x \in M$. So, for the fixed value of $\epsilon' >0$ as above, and for $\mu$-a.e. $x \in M$, there exists $N(x) \geq 1$ such that $$\mbox{dist}^*(\sigma_n(x), \mu) < \epsilon'/2 \ \ \ \forall \, n \geq N(x).$$
    For any natural value of $N \geq 1$, define the set \begin{equation}
    \label{eqnL31}
     A_N := \Big \{x \in M: \ \ \mbox{dist}^*(\sigma_n(x), \mu) < \epsilon'/2 \ \ \ \forall \, n \geq N \Big\}.\end{equation}
    Since $A_N \subset A_{N+1}$ and $\mu\Big(\bigcup A_N \Big)= 1$, there exists $N\geq 1$ such that
    \begin{equation}
    \label{eqnL32}\mu(A_N) \geq 1-\epsilon.\end{equation}
    In the sequel, we fix such a value of $N \geq 1$.


     From the definition of the metrizable weak$^*$-topology in the space ${\mathcal M}$ of Borel-probability measures, it is standard to check that the Dirac delta probability $ \delta_x  $ depends uniformly continuously on the point $x \in M$. Since the empiric probability $\sigma_n(x)$ is a convex combination of Dirac delta measures, and the balls  in ${\mathcal M}$ are convex, we deduce that there exists $\delta_2 >0$ such that, for any pair of points $x,y \in M$ and for any natural value of $n \geq 1$, the following assertion holds:

     \begin{equation}
     \label{eqnL35}
     \mbox{If } \ \mbox{dist}(f^j(x), f^j(y)) < \delta_2 \ \mbox{ for all }   0 \leq j \leq n-1, \ \mbox{ then } \ \mbox{dist}^*(\sigma_n(x), \sigma_n(y)) < \epsilon'/2.
     \end{equation}

     For the fixed value of $\epsilon >0 $ at the beginning, we   construct the real numbers $0<\delta_0 < \alpha$  (where $\alpha$ is expansivity constant), and $K >0$, as in   Lemma \ref{lemmaFoliation}. We consider any  Markov partition ${\mathcal R} = \{R_i\}_{1 \leq i \leq k}$ with diameter smaller than $\min\{\delta_0, \delta_1, \delta_2\}$ and, for each rectangle $R_i$, we construct the $C^1$-foliation ${\mathcal L}_i$ that satisfies   the properties (\textsc{a})   to (\textsc{d}) of Lemma \ref{lemmaFoliation}.

       From equality (\ref{eqnL31}), assertion (\ref{eqnL35}), and the triangle property of the metric, we deduce the following assertion for all $n \geq N$:
     $$\mbox{If } x \in A_N \mbox{ and } y \in R_n(x), \ \ \mbox{ then } $$ \begin{equation} \label{eqnL37} \mbox{dist}^*(\sigma_n(y), \mu) \leq \mbox{dist}^*(\sigma_n(y), \sigma_n(x)) + \mbox{dist}^*(\sigma_n(x), \mu) <  \frac{\epsilon'}{2} + \frac{\epsilon'}{2} = \epsilon'< \epsilon .\end{equation}
     Recalling equality (\ref{eqnL16}), from the above assertion we deduce that $y \in A_{  \epsilon, \, n}(\mu)$ for all $y \in R_n(x)$.
     Since the rectangle $R_n(x)$ is any piece of the partition ${\mathcal R}_n = \bigvee_{j= 0}^{n-1} {\mathcal R}$ that intersects $A_N$, we deduce the following statement for all $n \geq N$:
     $$\mbox{If } Y \in {\mathcal R}_n \mbox{ and } Y \cap A_N \neq \emptyset, \mbox{ then } Y \subset A_{  \epsilon, \, n}(\mu).$$
     Therefore
     \begin{equation} \label{eqnL36}\Leb\Big(A_{\epsilon, \, n} (\mu)\Big) \geq \sum_{\displaystyle Y \in {\mathcal R}_n, \ Y \cap A_N \neq \emptyset} \Leb(Y).\end{equation}
     Besides, joining  assertion  (\ref{eqnL30}) and inequality (\ref{eqnL37}), we deduce the following property for all $n \geq N$:
     $$\mbox{If } Y \in {\mathcal R}_n \mbox{ and } Y \cap A_N \neq \emptyset, \mbox{ then } $$
     \begin{equation}
     \label{eqnL38} \Big|\log\big|\det (Df^{n}_y|_{F(y)})\big| - n \cdot \int \psi \, d \mu\Big| < n \cdot \epsilon \ \ \ \forall \ y \in Y.\end{equation}

     Now, for any $n \geq N$, let us compute $\Leb(Y)$ for any rectangle $Y \in {\mathcal R}_n$ such that $Y \cap A_N \neq \emptyset$. Since $Y \subset R_i \in {\mathcal R}$, to compute $\Leb(Y)$ we will use the Fubini decomposition of  the Lebesgue measure along the local pseudo-unstable $C^1$-foliation ${\mathcal L}_i$. Applying part  (\textsc{d}) of Lemma \ref{lemmaFoliation} consider the point $x_i \in R_i$ and the  submanifold $A^s_i \subset W^s_{R_i}(x_i)$.  Taking the Fubini decomposition of $\Leb$ we obtain:

     $$\Leb(Y) = \int_{z \in W^s_{R_i}(x_i)} \, d \Leb^{W^s(x_i)}(z) \int_{y \in {\mathcal L}_i(z) \cap Y} |\det D\phi_i(y)| \, d \Leb^{{\mathcal L}_i(z)}(y) ,$$
     where $\phi_i^{-1}$ is a local $C^1$-diffeomorhism that parameterizes   the neighborhood of $R_i$ and trivializes the $C^1$-foliation ${\mathcal L}_i$. Therefore, $|\det D\phi_i|$ is continuous and bounded away from zero by a constant, say $k_i >0$.
     Since $A^s_i $ is an open subset of $W^s_{R_i}(x_i)$ in the topology of this local stable manifold, we obtain:
     $$ \Leb(Y) \geq k_i \cdot    \int_{z \in A^s_i} \, d \Leb^{W^s(x_i)}(z) \int_{y \in {\mathcal L}_i(z) \cap Y}   d \Leb^{{\mathcal L}_i(z)}(y).$$
     Changing variables $y' = f^n(y) $ in the  integral at right, we obtain:
     $$ \Leb(Y)  \geq  k_i \cdot    \int_{z \in A^s_i} I(z) \, d \Leb^{W^s(x_i)}(z), \mbox{ where } $$ $$I(z) = \int_{y' \in f^n\big({\mathcal L}_i(z) \cap Y \big)}  \Big|\det Df^{n}_{f^{-n}(y')}|_{\displaystyle T_{f^{-n}(y')} {\mathcal L}_i (z)} \, \Big|^{-1} d \Leb^{f^n({\mathcal L}_i(z)}(y').$$
      Since $y \in Y \subset R_i$, we can apply inequality at left of part  \textsc{c}) of Lemma \ref{lemmaFoliation}:
     $$ \Leb(Y)  \geq   k_i  \cdot K^{-1} \cdot  e^{-n \epsilon} \cdot \int_{z \in A^s_i}  J(z) \,   d \Leb^{W^s(x_i)}(z), \mbox{ where } $$ $$J(z)=  \int_{y' \in f^n\big({\mathcal L}_i(z) \cap Y \big)}  \Big|\det Df^{n}_{f^{-n}(y')}|_{\displaystyle F({f^{-n}(y'))} } \, \Big|^{-1} d \Leb^{f^n({\mathcal L}_i(z)}(y').$$

     Since $y =f^{-n}(y') \in Y$ and $Y \cap A_N \neq \emptyset$, we can apply inequality (\ref{eqnL38}):
      $$ \Leb(Y) \geq  k_i  \cdot K^{-1} \cdot  e^{\displaystyle -2n \epsilon -n \int \psi \, d \mu} \cdot J, \  \mbox{ where }$$ $$J= \int_{z \in A^s_i} \, d \Leb^{W^s(x_i)}(z) \int_{y' \in f^n\big({\mathcal L}_i(z) \cap Y \big)}   d \Leb^{f^n({\mathcal L}_i(z)}(y') = $$
     $$   \int_{z \in A^s_i} \Leb^{f^n({\mathcal L}_i(z)}\big( f^n({\mathcal L}_i(z)\big )\, d \Leb^{W^s(x_i)}(z).$$
     From part (\textsc{d}) of Lemma \ref{lemmaFoliation} we know that $\Leb^{f^n({\mathcal L}_i(z)}\big( f^n({\mathcal L}_i(z)\big ) \geq K^{-1}$ for all $z \in A^s_i$, and besides   $\Leb^{W^s(x_i)}(A^s_i) \geq K^{-1}$.  Thus, we have proved the following inequality for all $n \geq N$, and for all $Y \in {\mathcal R}_n$ such that $Y \cap A_N \neq \emptyset$:
     $$ \Leb(Y) \geq  k_i  \cdot K^{-3} \cdot  e^{\displaystyle -2n \epsilon -n \int \psi \, d \mu}.$$
     Joining the above inequality with inequality (\ref{eqnL36}), we deduce, for all $n \geq N$:
     $$\Leb(A_{\epsilon, \, n}(\mu)) \geq k_i  \cdot K^{-3} \cdot  e^{\displaystyle -2n \epsilon -n\int \psi \, d \mu + \log \# \{Y \in {\mathcal R}_n \colon Y \cap A_N \neq \emptyset\}}.$$
      Therefore,
     $$ \limsup_{n \rightarrow + \infty} \frac{\log \Leb(A_{\epsilon, \, n}(\mu))}{n}  \geq $$ $$-2 \epsilon - \int |\det Df|_F| \, d \mu  + \limsup_{n \rightarrow + \infty} \frac{\log \# \{Y \in {\mathcal R}_n \colon Y \cap A_N \neq \emptyset\}}{n}. $$
     Finally, applying Lemma \ref{lemmaEntropia}, we deduce that
     $$\limsup_{n \rightarrow + \infty} \frac{\log \Leb(A_{\epsilon, \, n}(\mu))}{n}  \geq $$ $$-2 \epsilon   - \int |\det Df|_F| \, d \mu + \limsup_{n \rightarrow + \infty} \frac{H({{\mathcal R}_n, \mu})}{n} -  K_0 \cdot \epsilon    \ \ \ \forall \ 0 < \epsilon < 1. $$
     So, from equality (\ref{eqnMetricEntropy}), we conclude
     $$\lim_{\epsilon \rightarrow 0^+} \limsup_{n \rightarrow + \infty} \frac{\log \Leb(A_{\epsilon, \, n}(\mu))}{n}  \geq h_{\mu}(f) -   \int |\det Df|_F| \, d \mu ,$$
     ending the proof of Theorem \ref{Theorem2}.
  \end{proof}

\vspace{.5cm}

Now, we are ready to end the proofs of Theorems \ref{mainTheo} and \ref{mainTheorem3}, as   consequences of Theorems \ref{Theorem2} , \ref{TheoremConvexHull} and \ref{TheoremCCE1}:

 \vspace{.3cm}

 \noindent \textsc { Part b) of Theorem \ref{mainTheo}, necessary condition: } \em If $f \in \mbox{\em Diff}^1(M)$ is Anosov and if $\mu$ is an invariant measure satisfying Pesin's Entropy Formula, then its ergodic components $\mu_x$ are weak pseudo-physical for $\mu$-a.e. $x \in M$.   \em

\begin{proof}
First, let us assume that $\mu$ is ergodic satisfying Pesin's Entropy Formula. From Theorem \ref{Theorem2} we obtain $$\lim_{\epsilon \rightarrow 0^+}  \limsup_{n \rightarrow + \infty} \frac{\log \Leb\Big(A_{\epsilon, \, n}(\mu) \Big)}{n} \geq 0.$$
From equality (\ref{eqnL16}), if $\epsilon_1 < \epsilon_2$ then $A_{\epsilon_1, \, n}(\mu) \subset A_{\epsilon_2, \, n}(\mu)$.

So   $\displaystyle \limsup_{n \rightarrow + \infty} \frac{\log \Leb\Big(A_{\epsilon, \, n}(\mu) \Big)}{n}$  is increasing with $\epsilon >0$. Thus
$$ \limsup_{n \rightarrow + \infty} \frac{\log \Leb\Big(A_{\epsilon, \, n}(\mu) \Big)}{n} \geq 0 \ \ \forall \ \epsilon >0.$$
But since $\Leb$ is a probability measure, we conclude that
$$\limsup_{n \rightarrow + \infty} \frac{\log \Leb\Big(A_{\epsilon, \, n}(\mu) \Big)}{n} = 0 \ \ \forall \ \epsilon >0. $$
Applying Definition \ref{definitionWeakPseudoPhysicalMeasure}, we deduce that $\mu$ is weak pseudo-physical.

We have proved that any ergodic measure that satisfies Pesin's Entropy Formula is weak pseudo-physical. Now let us consider a non ergodic measure $\mu$  that satisfies Pesin's Entropy Formula. From Theorem \ref{TheoremConvexHull} we know that its ergodic components $\mu_x$ also satisfy that formula for $\mu$-a.e. $x \in M$. We conclude that   the ergodic components $\mu_x$ of $\mu$ are weak pseudo-physical for $\mu$-a.e. $x \in M$, as wanted.
\end{proof}

\vspace{.3cm}

 Finally to complete all the proofs, we add the following immediate end:

 \vspace{.3cm}

\noindent \textsc { End of the proof of  Theorem \ref{mainTheorem3}}.

\begin{proof}
The equality of Theorem \ref{mainTheorem3}  is immediately obtained by joining the  inequalities   of Theorems \ref{TheoremCCE1} and \ref{Theorem2}.
\end{proof}


\bibliographystyle{amsalpha}

\end{document}